\documentclass[12pt,oneside]{amsart}

\usepackage[T1]{fontenc}
\usepackage[utf8]{inputenc}
\usepackage{lmodern}
\usepackage{microtype}
\usepackage{amsmath,amssymb,amsthm,mathtools}
\usepackage{enumitem}
\usepackage[colorlinks=true,linkcolor=blue,citecolor=blue,urlcolor=blue]{hyperref}
\newcommand{\doi}[1]{\href{https://doi.org/#1}{doi:\,#1}}

\newtheorem{theorem}{Theorem}[section]

\newtheorem{lemma}[theorem]{Lemma}
\newtheorem{corollary}[theorem]{Corollary}

\newtheorem{remark}[theorem]{Remark}
\newtheorem*{theoremA}{Theorem A}
\newtheorem*{theoremB}{Theorem B}

\newcommand{\R}{\mathbb R}

\newcommand{\N}{\mathbb N}
\newcommand{\cL}{\mathcal L}
\newcommand{\cM}{\mathcal M}
\newcommand{\cN}{\mathcal N}
\newcommand{\cR}{\mathcal R}
\newcommand{\Sym}{\operatorname{Sym}}
\newcommand{\dist}{\operatorname{dist}}

\newcommand{\Tr}{\operatorname{Tr}}

\newcommand{\HS}{\mathfrak S_2}
\newcommand{\dual}[2]{\langle #1,#2\rangle}

\title[Stability from Uniqueness for Inverse Problems]{H\"older Stability from Exact Uniqueness for Finite-Dimensional Analytic Inverse Problems}
\author[C. I. C\^{a}rstea]{C\u{a}t\u{a}lin I. C\^{a}rstea}
\address{Department of Applied Mathematics, National Yang Ming Chiao Tung University, Hsinchu, Taiwan}
\email{catalin.carstea@gmail.com}
\date{May 2026}

\begin{document}

\begin{abstract}
We prove a stability theorem for finite-dimensional analytic inverse problems.  Let
\(U\subset\R^m\) be an open parameter set, let \(F(p)\) be a boundary measurement
operator, and let \(R(p)\) be the finite-dimensional quantity to be recovered.  If
\(F\) is real analytic and
\[
        F(p)=F(q)\quad\Longrightarrow\quad R(p)=R(q),
\]
then \(R\) satisfies a H\"older stability estimate on every compact subset of \(U\).
The proof uses a Hilbert--Schmidt scalarization of the operator equation
\(F(p)=F(q)\) and the \L{}ojasiewicz distance inequality.  We also prove that,
after fixing countable dense families of boundary inputs and tests, finitely many
scalar matrix elements of the data give the same H\"older recovery on compact
parameter sets.  This finite-measurement conclusion is qualitative: the proof does
not give an effective measurement list, exponent, or constant.  The
finite-measurement statement follows from finite determinacy of real analytic
zero sets.  We apply the result to local Neumann-to-Dirichlet
data for piecewise constant anisotropic conductivities and to localized
Dirichlet-to-Neumann data for piecewise homogeneous anisotropic elasticity.
\end{abstract}

\maketitle

\section{Introduction}

Inverse boundary value problems ask whether one can determine an unknown
coefficient inside a domain from measurements made only at the boundary.  A
model example is the conductivity equation
\[
        \nabla\cdot(\gamma\nabla u)=0\quad \hbox{in }\Omega,
        \qquad u|_{\partial\Omega}=f,
\]
where the coefficient \(\gamma\) describes an interior material property.  The
Dirichlet-to-Neumann map sends the imposed boundary voltage \(f\) to the
corresponding boundary current
\[
        \Lambda_\gamma f = \gamma \partial_\nu u|_{\partial\Omega}
\]
in the isotropic scalar case, with the usual conormal derivative in the
anisotropic case.  Neumann-to-Dirichlet maps, partial boundary maps, and the
boundary maps for elliptic systems play the same role in other problems.  In
all cases the data are represented by an operator between boundary trace spaces,
and the inverse problem is to recover some part of the interior coefficient from
this operator.  The conductivity problem goes back to Calder\'on
\cite{Calderon1980}; foundational uniqueness results in this direction include
Kohn--Vogelius \cite{KohnVogelius1984,KohnVogelius1985},
Sylvester--Uhlmann \cite{SylvesterUhlmann1987} in dimensions at least three,
and the corresponding planar result of Nachman \cite{Nachman1996}.

There are two distinct questions.  The first is uniqueness: does equality of the
boundary maps force equality of the coefficients, or of the part of the
coefficient one wants to recover?  The second is stability.  A stability estimate
controls the error in the recovered coefficient in terms of the error in the
measured boundary map.  In a typical form one seeks
\[
        \|\gamma_1-\gamma_2\|_{\mathcal C}
        \le
        \omega\bigl(\|\Lambda_{\gamma_1}-\Lambda_{\gamma_2}\|\bigr),
        \qquad \omega(t)\to0\quad(t\to0),
\]
where \(\|\cdot\|_{\mathcal C}\) denotes a chosen norm on the admissible
coefficient class.  The modulus \(\omega\) is important.  Lipschitz and
H\"older estimates are power-type estimates, while logarithmic estimates give
much weaker control.  This distinction is not merely technical: boundary
measurements are never exact, and the modulus describes how errors in the data
propagate into the reconstruction.

In broad infinite-dimensional coefficient classes, elliptic inverse boundary
value problems are severely ill-posed.  For the Calder\'on problem,
Alessandrini proved a logarithmic stability estimate for conductivities
satisfying suitable a priori bounds \cite{Alessandrini1988}.  Mandache's
exponential instability construction \cite{Mandache2001} shows that logarithmic
behavior is essentially unavoidable in such unrestricted settings.  Thus exact
uniqueness does not by itself imply useful quantitative stability in the usual
infinite-dimensional regimes.

The picture changes when the coefficient is known a priori to belong to a
finite-dimensional or otherwise geometrically restricted class.  The prototype
is the Lipschitz stability theorem of Alessandrini--Vessella for piecewise
constant isotropic conductivities on a known partition
\cite{AlessandriniVessella2005}.  Related Lipschitz estimates have been proved
for complex admittivities, Schr\"odinger or Helmholtz potentials, conformal
anisotropic classes, piecewise linear conductivities, and shape-restricted
conductivity inclusions; see, for example,
\cite{BerettaFrancini2011,BerettaDeHoopQiu2013,GaburroSincich2015,AlessandriniDeHoopGaburroSincich2017,AspriBerettaFranciniVessella2022}.
For elasticity, finite-dimensional stability results are available in several
isotropic settings, including piecewise constant Lam\'e parameters, non-flat
interfaces, and time-harmonic elastic waves
\cite{BerettaFranciniVessella2014,BerettaFranciniMorassiRossetVessella2014,BerettaDeHoopFranciniVessellaZhai2017}.
These works typically obtain the stronger Lipschitz rate, but their proofs are
problem-specific and rely on quantitative PDE information such as singular
solutions, propagation of smallness, quantitative unique continuation, or
careful interface arguments.

A related line of work asks whether finite-dimensional a priori structure also
reduces the number of measurements needed.  Alberti and Santacesaria proved
finite-measurement uniqueness, Lipschitz stability, and reconstruction for
finite-dimensional Calder\'on and Schr\"odinger problems
\cite{AlbertiSantacesaria2019}, and later developed a broader framework for
preserving infinite-dimensional inverse-problem information under finite
measurements \cite{AlbertiSantacesaria2022}.  R\"uland--Sincich proved a
finite-Cauchy-data Lipschitz result for a finite-dimensional fractional
Calder\'on problem and later used quantitative Runge approximation in a
finite-dimensional Schr\"odinger problem
\cite{RulandSincich2019,RulandSincich2022}.  Harrach obtained uniqueness and
Lipschitz stability in electrical impedance tomography with finitely many
electrodes for finite-dimensional subsets of piecewise analytic conductivities
\cite{Harrach2019}.  These results are close in spirit to the finite-measurement
part of the present paper, but the mechanism is different: they use additional
PDE, sampling, monotonicity, localized-potential, or reconstruction structure to
obtain Lipschitz estimates.

There are also abstract finite-dimensional stability principles.  Bourgeois
proved a global Lipschitz theorem for nonlinear inverse problems on compact
finite-dimensional sets under \(C^1\) regularity, injectivity of the forward
map, and injectivity of the Fr\'echet derivative on the finite-dimensional
subspace \cite{Bourgeois2013}.  More recently,
Alberti--Arroyo--Santacesaria studied inverse problems on low-dimensional
manifolds, obtaining H\"older and Lipschitz stability results and finite
discretizations of the data under manifold and embedding hypotheses
\cite{AlbertiArroyoSantacesaria2023}.  The present paper belongs to this
finite-dimensional abstract tradition, but uses a different hypothesis package:
real analyticity of the forward map and exact recovery.  No injectivity of the
linearized map is assumed.  The price is that the stability exponent and the
finite measurement set are nonconstructive, and the resulting general estimate
is H\"older rather than Lipschitz.

The purpose of this paper is to isolate this analytic mechanism.  We prove that,
in a finite-dimensional real analytic family, an exact recovery theorem already
forces a H\"older stability estimate on compact admissible parameter sets.  The
source of stability is not a quantitative unique continuation estimate for the
underlying PDE, but the finite-dimensional analytic geometry of the data
mismatch.  This makes the result useful as a transfer principle: once analytic
dependence of the boundary map and an exact uniqueness theorem are known, one
obtains a qualitative global H\"older estimate without reproving a
problem-specific quantitative stability theorem.

Let \(U\subset\R^m\) be open.  We consider a measurement map
\[
        F:U\to \cL(X,Y),
\]
where \(X\) and \(Y\) are separable Hilbert spaces, and a recovered quantity
\[
        R:U\to\R^r .
\]
The case \(R(p)=p\) corresponds to full recovery.  Allowing a general \(R\) is
useful for partial recovery and for layer-stripping theorems, where the data may
determine only the cells reachable from the measured boundary.

The main abstract result is the following.  The precise statement and proof are
given in Theorem~\ref{thm:abstract-holder}.

\begin{theoremA}
Assume that \(F:U\to\cL(X,Y)\) is real analytic in the operator norm and that
\(R:U\to\R^r\) is \(C^1\).  Suppose that
\[
        F(p)=F(q)\quad\Longrightarrow\quad R(p)=R(q),
        \qquad p,q\in U .
\]
Then, for every compact set \(K\Subset U\), there exist \(C>0\) and
\(\theta\in(0,1]\) such that
\[
        |R(p)-R(q)|
        \le C\|F(p)-F(q)\|_{\cL(X,Y)}^\theta,
        \qquad p,q\in K .
\]
\end{theoremA}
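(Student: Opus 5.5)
The plan is to reduce the operator identity \(F(p)=F(q)\) to the vanishing of a single real analytic scalar function on \(U\times U\), and then to apply the \L{}ojasiewicz distance inequality. The scalar function is built from a Hilbert--Schmidt scalarization. Fix orthonormal bases \((e_j)\) of \(X\) and \((f_k)\) of \(Y\), together with weights \(w_j>0\) satisfying \(\sum_j w_j^2<\infty\). Set
\[
        \Phi(p,q)=\sum_{j,k} w_j^2\,\bigl|\dual{(F(p)-F(q))e_j}{f_k}\bigr|^2
        =\|(F(p)-F(q))W\|_{\HS}^2 ,
\]
where \(W\) is the diagonal Hilbert--Schmidt operator \(We_j=w_je_j\). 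Three properties of \(\Phi\) will be checked.

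\begin{enumerate}[label=(\roman*)]
\item \(\Phi\) is real analytic on \(U\times U\). The map \((p,q)\mapsto (F(p)-F(q))W\) is analytic into \(\HS(X,Y)\), because composition with the fixed operator \(W\) is bounded from \(\cL(X,Y)\) to \(\HS\). The squared Hilbert--Schmidt norm is a continuous quadratic form, so its composition with an analytic map is analytic.
\item The zero set is exactly \(Z=\{\Phi=0\}=\{(p,q):F(p)=F(q)\}\). This holds because \(W\) has dense range.
\item The upper bound \(\Phi(p,q)\le \|W\|_{\HS}^2\,\|F(p)-F(q)\|_{\cL(X,Y)}^2\) holds.
\end{enumerate}

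Next, apply the \L{}ojasiewicz distance inequality to the analytic function \(\Phi\) on a compact neighbourhood \(K'\) of \(K\times K\) inside \(U\times U\). It gives constants \(c>0\) and \(\alpha\ge1\) with
\[
        \dist\bigl((p,q),Z\bigr)^{\alpha}\le c\,\Phi(p,q), \qquad (p,q)\in K\times K .
\]
We need the distance to be realised inside a compact set where \(R\) is Lipschitz. So take \(\dist\) relative to \(Z\cap K'\), and treat separately the points whose distance to \(Z\) would be attained outside \(K'\). Those points are at distance at least \(\delta=\dist(K\times K,\partial K')\) from \(Z\cap K'\). For them, \(\Phi\) is bounded below by a positive constant on the compact set \(\{(p,q)\in K\times K:\dist((p,q),Z\cap K')\ge\delta\}\). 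Since \(R\) is bounded on \(K\), the estimate \(|R(p)-R(q)|\le C\Phi^{1/2}\) is then trivial there.

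Now choose \((p',q')\in Z\cap K'\) realising the distance \(d\). By hypothesis \(R(p')=R(q')\). Since \(R\) is \(C^1\), it is Lipschitz on a convex-enough compact neighbourhood; if needed, cover \(K'\) by finitely many balls, or use the segment bound valid for \(d<\delta\). This gives
\[
        |R(p)-R(q)|\le |R(p)-R(p')|+|R(q')-R(q)|\le 2L\,d\le C\,\Phi^{1/\alpha}\le C\,\|F(p)-F(q)\|^{2/\alpha}.
\]
This yields \(\theta=\min(1,2/\alpha)\); when \(2/\alpha>1\) we lower the exponent to 1 using boundedness of \(\|F(p)-F(q)\|\) on \(K\).

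The main obstacle is the analyticity of \(\Phi\) and the precise form of the \L{}ojasiewicz inequality we invoke. The inequality is local: near each point it gives \(\dist(x,Z)^\alpha\le c|\Phi(x)|\) on a compact set, and that is exactly what is needed here. I would verify carefully that analyticity in operator norm transfers to analyticity of \(\Phi\). The point is that \(\Phi\) is the composition of the analytic map \((p,q)\mapsto(F(p)-F(q))W\in\HS\) with the bounded quadratic form \(\|\cdot\|_{\HS}^2\), so its local power series converge. The rest is bookkeeping with compactness.
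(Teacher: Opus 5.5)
Your proposal is correct and follows essentially the same route as the paper: a Hilbert--Schmidt scalarization $\Phi$ whose zero set is exactly $\{F(p)=F(q)\}$, the \L{}ojasiewicz distance inequality on $K\times K$, and a mean-value bound for $R(p)-R(q)$ near the zero set combined with a trivial bound away from it (the paper packages this last step as Lemma~\ref{lem:vanishing-distance}). The differences are cosmetic. Your one-sided weight $(F(p)-F(q))W$ already suffices in place of the paper's two-sided $J_Y^*(F(p)-F(q))J_X$, since $BW$ is Hilbert--Schmidt whenever $B$ is bounded, and you handle far points through a positive lower bound on $\Phi$ rather than through the case $\dist(w,Z)\ge\rho/2$.
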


The proof is based on a scalar analytic reduction.  Choose Hilbert--Schmidt
operators with dense ranges,
\[
        J_X:H_X\to X,\qquad J_Y:H_Y\to Y,
\]
and set
\[
        \Phi(p,q)
        =
        \|J_Y^*(F(p)-F(q))J_X\|_{\HS}^2 .
\]
Then \(\Phi\) is a scalar real analytic function on \(U\times U\), and the dense
range condition gives
\[
        \Phi(p,q)=0
        \quad\Longleftrightarrow\quad
        F(p)=F(q).
\]
Thus \(R(p)-R(q)\) vanishes on the zero set of \(\Phi\).  The \L{}ojasiewicz
distance inequality controls the distance from \((p,q)\) to this zero set by a
power of \(|\Phi(p,q)|\), and a \(C^1\) estimate for \(R(p)-R(q)\) gives the
H\"older bound.

The same argument also has a finite-measurement form.  Fix countable dense
families \((x_i)_{i\ge1}\subset X\) and \((y_j)_{j\ge1}\subset Y\), and define
\[
        m_{ij}(p)=(F(p)x_i,y_j)_Y .
\]
If the natural data space is \(Y^*\), this pairing is replaced by the
corresponding duality bracket.  The finite-measurement theorem is proved in
Theorem~\ref{thm:finite-measurements}.

\begin{theoremB}
Under the assumptions of Theorem A, for every compact \(K\Subset U\) there are
finitely many pairs \((i_\ell,j_\ell)\), \(\ell=1,\ldots,M\), and constants
\(C>0\), \(\theta\in(0,1]\), such that
\[
        |R(p)-R(q)|
        \le C
        \left(\sum_{\ell=1}^M
        |m_{i_\ell j_\ell}(p)-m_{i_\ell j_\ell}(q)|^2
        \right)^{\theta/2}
\]
for all \(p,q\in K\).
\end{theoremB}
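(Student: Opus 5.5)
The plan is to reduce Theorem~B to Theorem~A through finite determinacy of real analytic zero sets, and then to rerun the \L{}ojasiewicz argument with a finite sum of squares in place of \(\Phi\).

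First I will verify that for every \((i,j)\) the scalar function
\[
g_{ij}(p,q)=m_{ij}(p)-m_{ij}(q)
\]
is real analytic on \(U\times U\). This holds because \(p\mapsto(F(p)x_i,y_j)_Y\) is the composition of an operator-norm analytic map with a bounded linear functional. Since \((x_i)\) and \((y_j)\) are dense, we have
\[
Z:=\{(p,q): F(p)=F(q)\}=\bigcap_{i,j}\{g_{ij}=0\}.
\]
Indeed, if all \(g_{ij}\) vanish, then \((F(p)-F(q))x_i\perp y_j\) for all \(j\), so \((F(p)-F(q))x_i=0\) for all \(i\), and boundedness together with density give \(F(p)=F(q)\).

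Next I will pick an open, relatively compact \(W\) with \(K\times K\subset W\Subset U\times U\). The key step is to show that finitely many of the \(g_{ij}\) already cut out \(Z\cap \overline{W}\); I expect this to be the main obstacle. The tool is the descending chain condition for real analytic sets in the germ sense. Enumerate the pairs as \(\ell=1,2,\dots\) and set \(Z_M=\bigcap_{\ell\le M}\{g_{\ell}=0\}\). The germs of \(Z_M\) at each point \(a\in\overline W\) form a decreasing chain. By Noetherianity of the ring of convergent power series, each chain of germs of analytic sets stabilizes, so there is a neighborhood \(V_a\) and an \(M_a\) with \(Z_{M_a}\cap V_a=Z_{M}\cap V_a\) for all \(M\ge M_a\). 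The stabilized germ equals \(Z\) near \(a\), though this needs care: the germ limit must be taken with a uniform neighborhood. To get it, I will apply Noetherianity to the ideals generated by the \(g_\ell\) in the stalk \(\mathcal O_a\), and then use Oka/Cartan coherence, or simply a Weierstrass-type argument, to say the ideal is generated on a fixed small polydisc by \(g_1,\dots,g_{M_a}\). Then every \(g_\ell\) is a combination of these on \(V_a\), so \(Z\cap V_a=Z_{M_a}\cap V_a\). Compactness of \(\overline W\) then yields a single \(M\) with \(Z\cap\overline W=Z_M\cap\overline W\).

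Finally I will set
\[
\Psi=\sum_{\ell\le M}g_\ell^2,
\]
which is real analytic with zero set \(Z\) on \(\overline W\), and repeat the argument of Theorem~A with \(\Psi\) in place of \(\Phi\). The \L{}ojasiewicz distance inequality on the compact set \(K\times K\) gives
\[
\dist((p,q),Z)\le C\,\Psi(p,q)^{\alpha}.
\]
Because \(R(p)-R(q)\) vanishes on \(Z\) and is \(C^1\), hence Lipschitz, near \(K\times K\), we obtain
\[
|R(p)-R(q)|\le C\dist((p,q),Z\cap\overline W).
\]
If the nearest point of \(Z\) leaves \(W\), the distance is bounded below and \(R\) is bounded, so the estimate is trivial after enlarging \(C\). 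Combining these gives the claimed bound with \(\theta=2\alpha\), capped at \(1\) by adjusting \(C\) on the bounded set.
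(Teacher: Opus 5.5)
Your argument follows the paper's proof of Theorem~\ref{thm:finite-measurements} step for step. Density identifies the common zero set of the $g_\ell$ with $\{F(p)=F(q)\}$. Noetherianity of stalks plus compactness produces a finite list; this is the paper's Lemma~\ref{lem:finite-zero-germ}. Then the \L{}ojasiewicz inequality for the finite sum of squares on a neighborhood of $K\times K$, combined with Lemma~\ref{lem:vanishing-distance} and the capping of the exponent, gives the estimate.

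The one step that is not actually established is the one you singled out. Noetherianity of $\mathcal O_a$ gives, for each $M\ge M_a$, equality of the germs at $a$ of $Z_M$ and $Z_{M_a}$. That is equality on a neighborhood that may depend on $M$. Likewise, the stalk relation $[g_\ell]_a\in([g_1]_a,\dots,[g_{M_a}]_a)$ yields $g_\ell=\sum_{k\le M_a}c_{\ell k}g_k$ only on some neighborhood $V_{a,\ell}$ depending on $\ell$. Adding coherence of $(g_1,\dots,g_{M_a})\mathcal O$ or Weierstrass division does not change this. Nothing in these tools prevents $V_{a,\ell}$ from shrinking to $a$ as $\ell\to\infty$, while you need one neighborhood serving all $\ell$ at once. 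The paper's proof of Lemma~\ref{lem:finite-zero-germ} makes exactly the same passage (``after shrinking to a neighborhood'') without further comment. Remark~\ref{rem:finite-zero-germ-local} even asserts that Noetherianity alone suffices. So this is a lacuna you share with the paper, not a departure from it.

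Only the set-theoretic statement is needed, and the hypotheses supply it. Since $F$ is analytic in the operator norm, near $p_0$ it extends holomorphically, as an operator-valued map, to a complex polydisc $P_{p_0}$. Hence every $m_{ij}$ extends holomorphically to the same $P_{p_0}$, with no dependence on $(i,j)$. So near $a=(p_0,q_0)$ all $g_\ell$ extend to holomorphic $\tilde g_\ell$ on one fixed domain $P_{p_0}\times P_{q_0}$. The sets $\tilde Z_M=\{\tilde g_1=\dots=\tilde g_M=0\}$ then form a decreasing sequence of complex analytic subsets of that domain.

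Such sequences are locally stationary. Take a small ball $B$ about $a$ and set $d=\dim(\tilde Z_1\cap B)$. Each $d$-dimensional irreducible component of $\tilde Z_M\cap B$ lies in some irreducible component of $\tilde Z_1\cap B$, which must then be $d$-dimensional and equal to it. Since $\tilde Z_1\cap B$ has only finitely many such components, these stabilize, and one inducts on $d$ using the union of the remaining components. Taking real points gives $Z\cap V_a=Z_{M_a}\cap V_a$ with $V_a$ independent of $M$. For an arbitrary countable family of real analytic functions, as in Lemma~\ref{lem:finite-zero-germ}, you could instead invoke Frisch's theorem that the ring of analytic germs along a closed ball is Noetherian. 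With this step supplied, the remainder of your proof goes through as written.
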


Here the additional input is finite determinacy of real analytic zero sets.  The
countable family of equations
\[
        m_{ij}(p)-m_{ij}(q)=0
\]
has common zero set \(F(p)=F(q)\).  Cartan coherence implies that, locally in
\(U\times U\), this same zero set is defined by finitely many of these
equations.  A compactness argument gives a finite list on a neighborhood of
\(K\times K\).  The finite list depends on \(K\), the chosen dense families, and
the analytic forward map; the theorem does not claim that the measurements are
optimal or effectively computable.

The two inverse problems treated below are meant to illustrate the use of the
abstract theorem, not to delimit its range of applicability.  The first
application concerns piecewise constant anisotropic conductivities.  Let
\[
        \sigma_A(x)=\sum_{j=1}^N A_j\chi_{D_j}(x),
\]
where the \(D_j\) form a known partition and the matrices \(A_j\) are positive
definite and symmetric.  Under the geometric assumptions of
Alessandrini--de Hoop--Gaburro \cite{AlessandriniDeHoopGaburro2017}, equality of
the local Neumann-to-Dirichlet maps on the measured boundary patch implies
\(A_j=B_j\) for all \(j\).  Since the map
\(A\mapsto\cN_A^\Sigma\) is real analytic, Theorem A gives, on every fixed
ellipticity class,
\[
        \max_{1\le j\le N}\|A_j-B_j\|
        \le C
        \|\cN_A^\Sigma-\cN_B^\Sigma\|^\theta .
\]
The finite-measurement theorem gives the corresponding estimate with finitely
many current--voltage pairings.  This should be compared with the Lipschitz
finite-dimensional conductivity results mentioned above, and also with the
boundary stability result for anisotropic conductivities in
\cite{AlessandriniGaburroSincich2024}; the present corollary has a weaker,
nonconstructive H\"older rate but applies by importing the exact local
anisotropic uniqueness theorem of \cite{AlessandriniDeHoopGaburro2017}.

The second application is the localized Dirichlet-to-Neumann map for static
anisotropic elasticity.  We use the layer-stripping uniqueness theorem of
C\^arstea--Honda--Nakamura \cite{CarsteaHondaNakamura2018} for piecewise
homogeneous tensors.  If \(\cR\) denotes the set of cells reachable from the
measured boundary cell through the admissible curved-interface chains, then
\[
        \Lambda_{\mathbb C}^{\Gamma}
        =
        \Lambda_{\widetilde{\mathbb C}}^{\Gamma}
        \quad\Longrightarrow\quad
        \mathbb C^\alpha=\widetilde{\mathbb C}^{\alpha},
        \qquad \alpha\in\cR .
\]
The analytic dependence of
\(\mathbb C\mapsto\Lambda_{\mathbb C}^{\Gamma}\) gives
\[
        \max_{\alpha\in\cR}
        |\mathbb C^\alpha-\widetilde{\mathbb C}^{\alpha}|
        \le C
        \|\Lambda_{\mathbb C}^{\Gamma}
        -\Lambda_{\widetilde{\mathbb C}}^{\Gamma}\|^\theta .
\]
If every cell is reachable, this is a full coefficient estimate.  The closest
previous elasticity stability results concern isotropic Lam\'e parameters or
related dynamic models rather than the localized anisotropic piecewise
homogeneous setting treated here.

The qualitative nature of the result should be kept in mind.  The constants,
exponents, and finite measurement sets come from \L{}ojasiewicz exponents and
local finite generation of analytic ideals associated with the forward map, and
the proof does not provide effective values.  This is why the theorem should be
viewed as a general compact analytic principle rather than as a replacement for
constructive Lipschitz stability.  Its contribution is the following division of
labor: the inverse-problem content is supplied by an exact uniqueness theorem,
while the passage from exact recovery to global H\"older stability and finite
scalar measurements is supplied by real analyticity and finite-dimensionality.

The paper is organized as follows.  Section~\ref{sec:preliminaries} records the
analytic and Hilbert-space tools used in the proof.  Section~\ref{sec:abstract}
proves the abstract H\"older theorem.  Section~\ref{sec:finite-measurements}
proves the finite scalar-measurement version.  Sections~\ref{sec:conductivity}
and \ref{sec:elasticity} apply the theorem to the conductivity and elasticity
problems described above.

\section{Preliminaries}\label{sec:preliminaries}

We record the analytic and Hilbert-space facts used in the abstract proof.

\subsection{Banach-valued real analytic maps}

Let $U\subset\R^m$ be open and let $B$ be a real Banach space.  A map
\[
        F:U\to B
\]
is real analytic if, for every $p_0\in U$, there are continuous symmetric multilinear maps $A_k:(\R^m)^k\to B$ such that
\[
        F(p_0+h)=\sum_{k=0}^\infty A_k(h,\ldots,h)
\]
for $h$ small, with convergence in the norm of $B$.  Equivalently, after complexifying $B$, the map extends locally to a holomorphic map of $m$ complex variables.

We shall use only basic closure properties.  Bounded linear maps preserve real analyticity.  Continuous multilinear maps applied to real analytic maps give real analytic maps.  Finally, if $E$ and $F$ are Banach spaces, then inversion
\[
        T\mapsto T^{-1}
\]
is real analytic on the open subset of $\cL(E,F)$ consisting of bounded isomorphisms $E\to F$.  This last fact follows from the Neumann series expansion around any invertible operator.

\subsection{Hilbert--Schmidt scalarization}

The next construction converts an operator-valued equation into a scalar analytic equation without changing its zero set.

\begin{lemma}[Dense Hilbert--Schmidt operators]\label{lem:HS-probes}
Let $X$ be a separable Hilbert space.  Then there exist a Hilbert space $H$ and a Hilbert--Schmidt operator
\[
        J:H\to X
\]
whose range is dense in $X$.
\end{lemma}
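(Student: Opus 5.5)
The plan is to build $J$ explicitly as a weighted diagonal operator in an orthonormal basis. Since $X$ is separable, it has an orthonormal basis $(e_k)_{k\in I}$ with $I$ finite or countable. If $X$ is finite-dimensional, the identity $J=\mathrm{id}:X\to X$ already works: every operator on a finite-dimensional space is Hilbert--Schmidt, and the identity is surjective. So the real case is $I=\N$.

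In that case I take $H=X$ (or, equally well, $H=\ell^2(\N)$ with standard basis $(\epsilon_k)$) and define
\[
        J\epsilon_k=\frac1k\,e_k,\qquad k\ge1,
\]
extended by linearity and continuity. Then
\[
        \Bigl\|J\sum_k c_k\epsilon_k\Bigr\|^2=\sum_k\frac{|c_k|^2}{k^2}\le\sum_k|c_k|^2,
\]
so $J$ is bounded with $\|J\|\le1$. Its Hilbert--Schmidt norm, computed in the basis $(\epsilon_k)$, is
\[
        \|J\|_{\HS}^2=\sum_k\|J\epsilon_k\|^2=\sum_k k^{-2}<\infty,
\]
so $J$ is Hilbert--Schmidt.

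For density, the range of $J$ contains each $e_k=J(k\,\epsilon_k)$. It therefore contains the linear span of the orthonormal basis, which is dense in $X$. Equivalently, $\ker J^*=\{0\}$: if $(x,J\epsilon_k)=0$ for all $k$, then $(x,e_k)=0$ for all $k$, so $x=0$.

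No step is a genuine obstacle. The only points needing care are treating the finite-dimensional case separately, and computing the Hilbert--Schmidt norm in an orthonormal basis of the domain $H$. A remark could be added that the range of $J$ is necessarily non-closed when $X$ is infinite-dimensional, since compact operators do not have infinite-dimensional closed range. This is why the lemma asks only for dense range.
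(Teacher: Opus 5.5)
Your proof is correct and is essentially the paper's argument: the paper also takes $H=\ell^2$ with a diagonal operator in an orthonormal basis of $X$, using weights $2^{-j}$ where you use $1/k$, and notes that the range contains all finite linear combinations of the basis vectors. The only difference is cosmetic: you use the identity in the finite-dimensional case, while the paper repeats the same construction with a finite sum.
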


\begin{proof}
If $X$ is infinite-dimensional, choose an orthonormal basis $(e_j)_{j\ge1}$ and take $H=\ell^2$.  Define
\[
        J(a_1,a_2,\ldots)=\sum_{j=1}^{\infty}2^{-j}a_j e_j .
\]
Then
\[
        \|J\|_{\HS}^2=\sum_{j=1}^{\infty}4^{-j}<\infty,
\]
so $J$ is Hilbert--Schmidt.  Its range contains every finite linear combination of the $e_j$, because $\sum_{j=1}^N c_j e_j$ is obtained by taking $a_j=2^j c_j$ for $j\le N$ and $a_j=0$ otherwise.  Hence the range is dense.  The finite-dimensional case is the same argument with a finite sum.
\end{proof}

Let $J_X:H_X\to X$ and $J_Y:H_Y\to Y$ be Hilbert--Schmidt operators.  For $B\in\cL(X,Y)$ define
\[
        Q(B)=J_Y^*BJ_X .
\]
Then $Q(B)\in\HS(H_X,H_Y)$ and
\begin{equation}\label{eq:HS-bound}
        \|J_Y^*BJ_X\|_{\HS(H_X,H_Y)}
        \le \|J_Y\|_{\HS}\,\|B\|_{\cL(X,Y)}\,\|J_X\|_{\HS}.
\end{equation}
Indeed,
\[
        \|BJ_X\|_{\HS}\le \|B\|\,\|J_X\|_{\HS},
        \qquad
        \|J_Y^*(BJ_X)\|_{\HS}
        \le \|J_Y^*\|_{\cL(Y,H_Y)}\|BJ_X\|_{\HS},
\]
and $\|J_Y^*\|_{\cL(Y,H_Y)}\le \|J_Y\|_{\HS}$.

The dense range assumption makes this scalarization faithful.

\begin{lemma}[Faithfulness]\label{lem:faithful}
Let $J_X:H_X\to X$ and $J_Y:H_Y\to Y$ have dense ranges.  If $B\in\cL(X,Y)$ and
\[
        J_Y^*BJ_X=0,
\]
then $B=0$.
\end{lemma}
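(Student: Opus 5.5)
The plan is to peel off the two outer factors one at a time, using only the dense range of $J_X$ and $J_Y$ and the continuity of $B$. The key tool is the standard Hilbert-space fact that an operator with dense range has injective adjoint. Indeed, if $T:H\to Z$ has dense range and $T^*z=0$, then $(z,Th)_Z=(T^*z,h)_H=0$ for all $h\in H$. So $z$ is orthogonal to a dense subspace, and hence $z=0$.

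First I will remove $J_Y^*$. Fix $h\in H_X$ and set $z=BJ_Xh\in Y$. The hypothesis $J_Y^*BJ_X=0$ gives $J_Y^*z=0$. Since $J_Y$ has dense range in $Y$, the fact above, applied with $T=J_Y$, yields $z=0$. Because $h$ was arbitrary, we get $BJ_X=0$ as an operator $H_X\to Y$.

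Next I will remove $J_X$. The identity $BJ_X=0$ says that $B$ vanishes on $\operatorname{ran}J_X$, which is dense in $X$. Since $B$ is bounded, its kernel is closed and contains this dense set, so $\ker B=X$. Hence $B=0$.

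No step is a real obstacle. The only point needing care is the order of the two reductions: the left factor $J_Y^*$ must be removed through injectivity of the adjoint, using orthogonality against a dense set, while the right factor $J_X$ is removed by continuity of $B$ on a dense set. The Hilbert--Schmidt property plays no role in this lemma; only the dense ranges matter.
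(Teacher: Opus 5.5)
Your proof is correct and is essentially the paper's argument. The paper likewise pairs against $J_Yb$ and uses density of $J_YH_Y$ to get $BJ_X=0$, then uses density of $J_XH_X$ and continuity of $B$; you have simply packaged the first step as injectivity of the adjoint of a dense-range operator.
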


\begin{proof}
For all $a\in H_X$ and $b\in H_Y$,
\[
        0=(J_Y^*BJ_Xa,b)_{H_Y}=(BJ_Xa,J_Yb)_Y .
\]
Since $J_YH_Y$ is dense in $Y$, this gives $BJ_Xa=0$ for every $a\in H_X$.  Since $J_XH_X$ is dense in $X$ and $B$ is continuous, $B=0$.
\end{proof}

\subsection{The \L{}ojasiewicz distance inequality}

The analytic input is the following distance form of the \L{}ojasiewicz inequality.

\begin{theorem}[\L{}ojasiewicz distance inequality]\label{thm:lojasiewicz}
Let $W\subset\R^d$ be open, let $\phi:W\to\R$ be real analytic, and let
\[
        Z=\{w\in W:\phi(w)=0\}.
\]
Assume $Z\ne\varnothing$.  For every compact set $K\Subset W$ there exist constants $C>0$ and $\beta\in(0,1]$ such that
\[
        \dist(w,Z)\le C|\phi(w)|^\beta,
        \qquad w\in K .
\]
\end{theorem}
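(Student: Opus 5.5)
This is the classical \L{}ojasiewicz inequality, and I would deduce it from the standard local form rather than reprove it from scratch. The local statement I would invoke is the one in \L{}ojasiewicz's work on semianalytic sets (see also Bierstone--Milman). For every $w_0\in W$ there are a neighbourhood $V_{w_0}$, a constant $C_{w_0}>0$ and an exponent $\beta_{w_0}\in(0,1]$ such that
\[
        \dist(w,Z)\le C_{w_0}|\phi(w)|^{\beta_{w_0}},\qquad w\in V_{w_0}.
\]
This local version is itself proved via resolution of singularities, or via the subanalytic curve selection lemma combined with the fact that $\dist(\cdot,Z)$ is subanalytic. I would take it as the deep input. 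The plan is therefore a globalization on the compact set $K$, with separate treatment of points far from and near to $Z$.

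\textbf{Step 1: points near $Z$.} For $w_0\in K\cap Z$, apply the local inequality on a ball $V_{w_0}\Subset W$. There is one subtlety: the local inequality involves the distance to $Z\cap V$ rather than to $Z$. This is harmless, because $\dist(w,Z)\le\dist(w,Z\cap V_{w_0})$. I would also shrink $V_{w_0}$ so that it lies in a fixed compact $K'\Subset W$. Cover the compact set $K\cap Z$ by finitely many such neighbourhoods, and let $V$ be their union. On $V$ we get $\dist(w,Z)\le C_1|\phi(w)|^{\beta_1}$, where $C_1=\max C_{w_0}$ and $\beta_1=\min\beta_{w_0}$. Replacing several exponents by their minimum is legitimate once $|\phi|\le 1$. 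To ensure this, note that $\phi$ is bounded on $K'$. If $|\phi|\le 1$ fails, then $|\phi|$ is bounded below by a fixed constant, and the estimate holds trivially after enlarging $C_1$.

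\textbf{Step 2: points away from $Z$.} The set $K\setminus V$ is compact and disjoint from $Z$. Since $\phi$ is continuous and nonvanishing there, $|\phi|\ge c>0$ on $K\setminus V$. Because $Z\neq\varnothing$, fix $z_0\in Z$. Then $\dist(w,Z)\le |w-z_0|\le D$ for all $w\in K$, with $D$ finite since $K$ is bounded. Hence $\dist(w,Z)\le (D/c^{\beta_1})|\phi(w)|^{\beta_1}$ on $K\setminus V$.

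\textbf{Step 3: combine.} Take $\beta=\beta_1$ and $C=\max\{C_1,D c^{-\beta_1}\}$; this gives the inequality on all of $K$. If $K\cap Z=\varnothing$, Step 1 is empty and Step 2 alone suffices, with any $\beta\in(0,1]$.

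\textbf{Main obstacle.} The only real difficulty is the local inequality at points of $Z$ where $\phi$ has a degenerate zero. I would not attempt a self-contained proof of it, but cite \L{}ojasiewicz. If a sketch were wanted, the approach would be as follows. By Weierstrass preparation and induction on dimension, or by resolution of singularities, $\phi$ becomes locally a monomial times a unit, and for a monomial the inequality is elementary. The distance function then pushes forward through the proper resolution map using subanalytic comparison of the two continuous subanalytic functions $\dist(\cdot,Z)$ and $|\phi|$, which have the same zero set.
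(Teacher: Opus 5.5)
Your proposal is correct and takes essentially the paper's approach: the paper gives no proof of Theorem~\ref{thm:lojasiewicz} and simply imports the classical \L{}ojasiewicz inequality from the literature (Bochnak--Coste--Roy, Feehan), as you do. Your finite cover of the compact set $K\cap Z$, together with the trivial bound $\dist(w,Z)\le D\le Dc^{-\beta}|\phi(w)|^{\beta}$ on $K\setminus V$, is a valid routine passage from the pointwise-local form you cite to the compact-set form that the paper quotes directly.
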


For background one may consult Bochnak--Coste--Roy \cite[Sec. 2.6]{BochnakCosteRoy1998} or Feehan \cite{Feehan2020}.  We shall apply the theorem to a function of two parameters, $w=(p,q)$.

We also need the following elementary consequence of $C^1$ regularity.

\begin{lemma}[Vanishing on a closed set]\label{lem:vanishing-distance}
Let $W\subset\R^d$ be open, let $\varnothing\ne Z\subset W$ be closed relative to $W$, and let $H:W\to\R^r$ be $C^1$ with $H=0$ on $Z$.  Then for every compact $K\Subset W$ there exists $L_K>0$ such that
\[
        |H(w)|\le L_K\dist(w,Z),
        \qquad w\in K .
\]
\end{lemma}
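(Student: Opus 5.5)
The plan is to combine a mean value estimate along segments, which handles points close to \(Z\), with a trivial boundedness estimate for points far from \(Z\). The one issue that needs care is that the segment from \(w\in K\) to a nearby point of \(Z\) must stay inside \(W\), where \(H\) is defined and \(C^1\). For this I will fatten \(K\) by a fixed margin to a compact set that still lies inside \(W\).

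First I fix the margin. Since \(K\) is compact and \(W\) is open, \(\delta:=\dist(K,\R^d\setminus W)>0\); if \(W=\R^d\) I take \(\delta=1\). Set
\[
K'=\{w\in\R^d:\dist(w,K)\le \delta/2\}.
\]
This set is closed and bounded, so it is compact, and it is contained in \(W\). Since \(H\) is \(C^1\), the quantities \(M_1:=\sup_{K'}|DH|\) and \(M_0:=\sup_K|H|\) are finite. Since \(Z\ne\varnothing\), \(\dist(w,Z)<\infty\) for every \(w\).

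Next I treat \(w\in K\) with \(\dist(w,Z)<\delta/2\). For each small \(\varepsilon>0\) I choose \(z\in Z\) with
\[
|w-z|<\min\bigl(\delta/2,\ \dist(w,Z)+\varepsilon\bigr).
\]
The closed segment \([w,z]\) lies in the ball \(B(w,\delta/2)\subset K'\subset W\). Because \(H(z)=0\), the mean value inequality (applied componentwise, or via the fundamental theorem of calculus along the segment) gives
\[
|H(w)|=|H(w)-H(z)|\le M_1|w-z|\le M_1(\dist(w,Z)+\varepsilon).
\]
Letting \(\varepsilon\to0\) yields \(|H(w)|\le M_1\dist(w,Z)\). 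This step needs neither closedness of \(Z\) nor attainment of the distance, since near-minimizers suffice.

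Finally, if \(w\in K\) has \(\dist(w,Z)\ge\delta/2\), then trivially
\[
|H(w)|\le M_0\le \frac{2M_0}{\delta}\dist(w,Z).
\]
Taking \(L_K=\max(M_1,2M_0/\delta)\), plus \(1\) to ensure \(L_K>0\), proves the lemma. The only real obstacle is the one addressed by \(K'\): when \(W\) is not convex, a nearest point of \(Z\) may only be reachable by a segment that leaves \(W\). The two-case split confines the mean value argument to points within \(\delta/2\) of \(Z\), where the segment provably stays in \(K'\).
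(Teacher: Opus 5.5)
Your proposal is correct and follows essentially the same argument as the paper. Both proofs fatten $K$ by a fixed margin inside $W$, apply the mean value estimate with a near-minimizing $z\in Z$ when $\dist(w,Z)$ is below half the margin, and use the trivial bound by $\sup_K|H|$ otherwise. Your extra ``$+1$'' also repairs a small slip in the paper, whose choice $L_K=\max\{M,2A/\rho\}$ can equal $0$ when $H$ vanishes near $K$.
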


\begin{proof}
Choose $\rho>0$ such that the closed $\rho$-neighborhood of $K$ is compactly contained in $W$.  Set
\[
        M=\sup\{|DH(z)|:\dist(z,K)\le \rho\},
        \qquad
        A=\sup_{z\in K}|H(z)| .
\]
If $\dist(w,Z)<\rho/2$, choose $\varepsilon>0$ with $\dist(w,Z)+\varepsilon<\rho$, and choose $z\in Z$ such that
\[
        |w-z|\le \dist(w,Z)+\varepsilon .
\]
Every point of the segment from $w$ to $z$ lies within distance $|w-z|<\rho$ of $w\in K$, hence lies in the $\rho$-neighborhood of $K$.  Since $H(z)=0$, the mean value estimate gives
\[
        |H(w)|\le M|w-z|
        \le M(\dist(w,Z)+\varepsilon).
\]
Letting $\varepsilon\downarrow0$ gives $|H(w)|\le M\dist(w,Z)$.

If $\dist(w,Z)\ge\rho/2$, then
\[
        |H(w)|\le A\le (2A/\rho)\dist(w,Z).
\]
Taking $L_K=\max\{M,2A/\rho\}$ proves the result.
\end{proof}

\section{The abstract H\"older theorem}\label{sec:abstract}

We prove the abstract theorem in the operator-valued form needed later.  The statement is written for maps into \(\cL(X,Y)\).  If the natural boundary map takes values in \(\cL(X,Y^*)\), with \(Y\) Hilbert, we apply the theorem to \(\mathcal R_Y^{-1}F\), where \(\mathcal R_Y:Y\to Y^*\) is the Riesz isomorphism.  Equivalently, the same result holds for \(Y^*\)-valued data after measuring \(F(p)-F(q)\) in the corresponding operator norm.  In the finite-measurement version, the scalar observations are then duality pairings \(\dual{F(p)x}{y}\).

\begin{theorem}[Analytic exact recovery implies H\"older stability]\label{thm:abstract-holder}
Let $U\subset\R^m$ be open, let $K\Subset U$ be compact, and let $X,Y$ be real separable Hilbert spaces.  Let
\[
        F:U\to\cL(X,Y)
\]
be real analytic in the operator norm.  Let
\[
        R:U\to\R^r
\]
be $C^1$.  Assume the exact recovery implication
\begin{equation}\label{eq:exact-recovery}
        F(p)=F(q) \quad\Longrightarrow\quad R(p)=R(q),
        \qquad p,q\in U .
\end{equation}
Then there exist constants $C>0$ and $\theta\in(0,1]$ such that
\begin{equation}\label{eq:abstract-holder}
        |R(p)-R(q)|
        \le C\|F(p)-F(q)\|_{\cL(X,Y)}^\theta,
        \qquad p,q\in K .
\end{equation}
\end{theorem}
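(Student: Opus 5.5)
We follow the scalarization strategy outlined in the introduction. First, use Lemma~\ref{lem:HS-probes} to fix Hilbert--Schmidt operators $J_X:H_X\to X$ and $J_Y:H_Y\to Y$ with dense ranges. On $W=U\times U\subset\R^{2m}$ define
\[
        \Phi(p,q)=\|J_Y^*(F(p)-F(q))J_X\|_{\HS}^2 .
\]
The map $(p,q)\mapsto F(p)-F(q)$ is real analytic into $\cL(X,Y)$. The map $Q(B)=J_Y^*BJ_X$ is bounded linear from $\cL(X,Y)$ into $\HS(H_X,H_Y)$ by \eqref{eq:HS-bound}. The squared Hilbert--Schmidt norm is a continuous quadratic form on the Hilbert space $\HS(H_X,H_Y)$. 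By the closure properties recorded in the preliminaries, $\Phi$ is therefore a real analytic scalar function on $W$.

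Let $Z=\{\Phi=0\}$. Lemma~\ref{lem:faithful} gives $Z=\{(p,q):F(p)=F(q)\}$. This set is relatively closed in $W$, and it is nonempty because it contains the diagonal. Set $H(p,q)=R(p)-R(q)$. Then $H$ is $C^1$ on $W$, and by \eqref{eq:exact-recovery} it vanishes on $Z$.

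Now apply both distance estimates on the compact set $K\times K\Subset W$. Lemma~\ref{lem:vanishing-distance} gives
\[
        |R(p)-R(q)|\le L\,\dist((p,q),Z).
\]
Theorem~\ref{thm:lojasiewicz} gives
\[
        \dist((p,q),Z)\le C_1|\Phi(p,q)|^\beta, \qquad \beta\in(0,1].
\]
Combining these with \eqref{eq:HS-bound},
\[
        |\Phi(p,q)|\le \|J_Y\|_{\HS}^2\|J_X\|_{\HS}^2\,\|F(p)-F(q)\|^2 ,
\]
we obtain
\[
        |R(p)-R(q)|\le C\|F(p)-F(q)\|^{2\beta}.
\]

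The resulting exponent $2\beta$ may exceed $1$, so the last step is to adjust it to lie in $(0,1]$. On $K\times K$, the quantity $\|F(p)-F(q)\|$ is bounded by some constant $D$, since $F$ is continuous on the compact set $K$. If $2\beta>1$, we use $t^{2\beta}\le D^{2\beta-1}t$ for $0\le t\le D$, which yields \eqref{eq:abstract-holder} with $\theta=1$. Otherwise we take $\theta=2\beta$.

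The main point requiring care is the analyticity of $\Phi$ as a genuinely scalar real analytic function, so that Theorem~\ref{thm:lojasiewicz} applies. We handle it by writing $\Phi=N\circ Q\circ G$, where $G(p,q)=F(p)-F(q)$ and $N(T)=\|T\|_{\HS}^2$. Locally, $G$ is a norm-convergent power series in $\cL(X,Y)$. Composing with the bounded linear map $Q$ gives a convergent series in $\HS$. Composing with the bounded bilinear form defining $N$ gives a convergent scalar double series, which can be rearranged in a smaller polydisc into a power series in $(p,q)$.
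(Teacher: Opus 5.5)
Your proposal is correct and follows the paper's proof essentially step for step. Like the paper, you use Hilbert--Schmidt scalarization via Lemma~\ref{lem:HS-probes} and Lemma~\ref{lem:faithful}, then the \L{}ojasiewicz distance inequality on $K\times K$ combined with Lemma~\ref{lem:vanishing-distance}, then the bound \eqref{eq:HS-bound}, and finally a reduction of the exponent $2\beta$ into $(0,1]$. Your only addition is a more explicit justification of the analyticity of $\Phi$, which the paper simply asserts.
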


\begin{proof}
Choose Hilbert--Schmidt operators $J_X:H_X\to X$ and $J_Y:H_Y\to Y$ with dense ranges, as in Lemma \ref{lem:HS-probes}.  Define
\[
        G(p)=J_Y^*F(p)J_X\in\HS(H_X,H_Y).
\]
The map $B\mapsto J_Y^*BJ_X$ is bounded linear from $\cL(X,Y)$ to $\HS(H_X,H_Y)$ by \eqref{eq:HS-bound}.  Hence $G$ is real analytic.

Set
\[
        \Phi(p,q)=\|G(p)-G(q)\|_{\HS}^2,
        \qquad (p,q)\in U\times U .
\]
Since the Hilbert--Schmidt norm comes from an inner product, $\Phi$ is a scalar real analytic function on $U\times U$.  Let
\[
        Z=\{(p,q)\in U\times U:\Phi(p,q)=0\}.
\]
By Lemma \ref{lem:faithful},
\[
        \Phi(p,q)=0
        \quad\Longleftrightarrow\quad
        F(p)=F(q).
\]
The exact recovery hypothesis therefore implies that
\[
        H(p,q):=R(p)-R(q)
\]
vanishes on $Z$.

The diagonal of $U\times U$ is contained in $Z$, so $Z$ is nonempty.  Apply Theorem \ref{thm:lojasiewicz} to $\Phi$ on the compact set $K\times K$.  There exist $C_1>0$ and $\beta\in(0,1]$ such that
\[
        \dist((p,q),Z)
        \le C_1\Phi(p,q)^\beta,
        \qquad p,q\in K .
\]
By Lemma \ref{lem:vanishing-distance}, applied to $H$ on $K\times K$, there is $C_2>0$ such that
\[
        |R(p)-R(q)|
        \le C_2\dist((p,q),Z),
        \qquad p,q\in K .
\]
Combining these estimates gives
\[
        |R(p)-R(q)|
        \le C_1C_2\|G(p)-G(q)\|_{\HS}^{2\beta}.
\]
Finally, \eqref{eq:HS-bound} gives
\[
        \|G(p)-G(q)\|_{\HS}
        \le \|J_Y\|_{\HS}\|J_X\|_{\HS}
        \|F(p)-F(q)\|_{\cL(X,Y)} .
\]
This proves \eqref{eq:abstract-holder} with exponent $\theta_0=2\beta$.  To put the exponent in the conventional range, let
\[
        M_K=\sup_{p,q\in K}\|F(p)-F(q)\|_{\cL(X,Y)}<\infty .
\]
If $M_K=0$, then $F$ is constant on $K$, and the exact recovery implication makes $R$ constant on $K$, so the estimate is trivial.  Otherwise set $\theta=\min\{\theta_0,1\}$.  For $0\le t\le M_K$ one has
\[
        t^{\theta_0}\le M_K^{\theta_0-\theta}t^\theta,
\]
and the preceding estimate gives \eqref{eq:abstract-holder} with exponent $\theta\in(0,1]$ after increasing the constant.
\end{proof}

\begin{remark}[Scope and limitations]\label{rem:scope-limitations}
The conclusion is a qualitative compact stability statement.  The hypotheses are
that the parameter space is finite-dimensional, the measurement map is real
analytic in the operator norm on an open parameter set, and the full boundary
data exactly determine the chosen recovered quantity.  The estimate is obtained
only on compact subsets of the parameter domain; it need not be uniform up to the
boundary of the admissible set.  The theorem also does not assert a Lipschitz
rate, an effective H\"older exponent, or computable constants.  The finite
measurement result below has the same qualitative character: it proves existence
of a finite separating measurement set on each compact set, but not an optimal or
explicitly computable one.
\end{remark}

\begin{remark}[Full, partial, and gauge-invariant recovery]\label{rem:partial-recovery}
Taking $R(p)=p$ gives full recovery.  Taking $R$ to be a projection gives partial recovery.  This is the natural form for layer-stripping results, where the data may determine only the cells reachable from the measured boundary.  If the inverse problem has a finite-dimensional gauge freedom, the theorem applies to any $C^1$ gauge-invariant recovered quantity $R$, or to a gauge-fixed representative on a chart, provided the exact theorem has the form \eqref{eq:exact-recovery}.
\end{remark}

\begin{remark}[Why analyticity is needed]\label{rem:smooth-counterexample}
The conclusion fails for general $C^\infty$ maps.  Let
\[
        \rho(s)=
        \begin{cases}
        e^{-1/s^2},& s\ne0,\\
        0,& s=0,
        \end{cases}
        \qquad
        F(t)=\int_0^t \rho(s)\,ds .
\]
Then $F\in C^\infty(\R)$.  It is strictly increasing: if $t_1<t_2$, then $\int_{t_1}^{t_2}\rho(s)\,ds>0$, since $\rho\ge0$ and $\rho>0$ except at one point.  Hence $F$ is injective.  However $F$ is flat at the origin.  For $t>0$ one has
\[
        0<F(t)\le t e^{-1/t^2},
\]
so no estimate of the form
\[
        |t|\le C|F(t)|^\alpha,
        \qquad C,\alpha>0,
\]
can hold near $0$.  Real analyticity excludes this infinite-order flatness through the \L{}ojasiewicz inequality.
\end{remark}

\section{Finite scalar measurements}\label{sec:finite-measurements}

The preceding theorem uses the full operator norm of \(F(p)-F(q)\).  We next show that, under the same hypotheses, finitely many scalar matrix elements suffice.

Let $(x_i)_{i\ge1}$ be a countable dense subset of $X$ and let $(y_j)_{j\ge1}$ be a countable dense subset of $Y$.  Define
\[
        m_{ij}(p)=(F(p)x_i,y_j)_Y .
\]
If the measurement map takes values in $\cL(X,Y^*)$, replace the inner product by the duality pairing $\dual{F(p)x_i}{y_j}$.

\subsection{The real analytic zero-set input}

We use the following local finite-determinacy property for countable families of
real analytic equations.  This is the only analytic-geometric input needed in
the finite-measurement argument.

\begin{lemma}[Local finite determinacy of analytic zero sets]\label{lem:finite-zero-germ}
Let $V\subset\R^d$ be open, let $w_0\in V$, and let
$(f_\nu)_{\nu\ge1}$ be a countable family of real analytic functions on $V$.
Then there exist $N_0\in\N$ and a neighborhood $O_{w_0}\Subset V$ of $w_0$
such that
\[
        \{f_1=\cdots=f_{N_0}=0\}\cap O_{w_0}
        =
        \left(\bigcap_{\nu=1}^\infty\{f_\nu=0\}\right)\cap O_{w_0} .
\]
\end{lemma}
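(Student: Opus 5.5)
We plan to prove this by Noetherianity of the ring of convergent power series at \(w_0\), combined with the fact that the germ of a zero set depends only on the ideal generated. Let \(\mathcal O_{w_0}\) denote the ring of germs at \(w_0\) of real analytic functions, identified with the convergent power series ring \(\R\{w-w_0\}\). This ring is Noetherian (Weierstrass preparation / Rückert basis theorem). Consider the ascending chain of ideals
\[
        I_N=(f_1,\ldots,f_N)\,\mathcal O_{w_0},\qquad N\ge1 .
\]
The chain stabilizes, so there is \(N_0\) with \(I_N=I_{N_0}\) for all \(N\ge N_0\). In particular, for every \(\nu>N_0\) we can write \(f_\nu=\sum_{k\le N_0}a_{\nu k}f_k\) as germs, with \(a_{\nu k}\in\mathcal O_{w_0}\).

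The difficulty is that each such identity holds only on some neighborhood \(O_\nu\) of \(w_0\), which depends on \(\nu\). Countably many neighborhoods may shrink to \(\{w_0\}\), so germ-level stabilization alone does not give a single set \(O_{w_0}\). This uniformity is the main obstacle. We would not try to control the radii of convergence of the \(a_{\nu k}\) directly. Instead we would use coherence, in the form already mentioned in the introduction.

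Work on a fixed small open ball \(B\Subset V\) around \(w_0\), chosen so that each \(f_\nu\) is the restriction of a holomorphic function on a common complex neighborhood \(\widetilde B\subset\mathbb C^d\). This is possible because real analytic functions on \(V\) extend holomorphically to some complex neighborhood of \(V\), which may depend on \(\nu\). Consequently this step also needs care; if it fails, we would instead argue with real analytic sheaves on \(V\) directly. The ideal sheaf \(\mathcal I\subset\mathcal O_V\) generated by all the \(f_\nu\) is a sum of finitely generated subsheaves \(\mathcal I_N=(f_1,\dots,f_N)\mathcal O_V\). By Cartan's Theorem A/coherence in the real analytic setting (Cartan, Frisch), together with the Noetherian property of increasing chains of coherent subsheaves on compact sets (a consequence of Oka–Cartan coherence), the chain \(\mathcal I_N\) stabilizes on a neighborhood of the compact set \(\overline{B'}\) for a smaller ball \(B'\). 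Hence \(\mathcal I_N=\mathcal I_{N_0}\) on \(B'\) for some \(N_0\).

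Granting sheaf stabilization on \(O_{w_0}:=B'\), the conclusion is immediate. The inclusion \(\supseteq\) is trivial. For \(\subseteq\), let \(w\in O_{w_0}\) satisfy \(f_1(w)=\dots=f_{N_0}(w)=0\). For each \(\nu\), the stalk \((\mathcal I_{\max(\nu,N_0)})_w=(\mathcal I_{N_0})_w\), so \(f_\nu=\sum_{k\le N_0}a_kf_k\) near \(w\). Evaluating at \(w\) gives \(f_\nu(w)=0\). The neighborhood now enters only pointwise, which removes the uniformity problem. Thus the whole weight rests on the stabilization statement for increasing chains of coherent ideal sheaves on a relatively compact set, which we would cite rather than reprove.
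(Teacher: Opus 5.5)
Your argument is correct in outline but takes a different route from the paper. Your objection to a purely stalk-level argument is well founded, and it applies to the paper's own proof.

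\textbf{The paper's route.} The paper works only in $\mathcal O_{w_0}$. It stabilizes $I_N=([f_1]_{w_0},\dots,[f_N]_{w_0})$ and then passes, ``after shrinking'', from equality of germ ideals to equality of the two zero sets on a single neighborhood $O_{w_0}$. Remark~\ref{rem:finite-zero-germ-local} even asserts that nothing beyond Noetherianity of $\mathcal O_{w_0}$ is used. That passage needs a neighborhood uniform in $\nu$, and, as you say, stalk Noetherianity does not provide one.

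\textbf{Your route.} Your semi-global stabilization supplies exactly this uniformity, and your final pointwise evaluation is then correct.

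\textbf{Citing the stabilization.} It is not a consequence of Oka--Cartan coherence, and Theorem A plays no role. Coherence only propagates $(\mathcal I_\nu)_{w_0}=(\mathcal I_{N_0})_{w_0}$ to some neighborhood depending on $\nu$, which is the same non-uniformity again. The right citation in the real analytic category is Frisch's theorem: for a compact semianalytic set such as a closed ball $\overline B\Subset V$, the ring $\mathcal O(\overline B)$ of germs along $\overline B$ is Noetherian. Stabilizing $(f_1,\dots,f_N)\mathcal O(\overline B)$ gives identities $f_\nu=\sum_{k\le N_0}a_{\nu k}f_k$ on neighborhoods of $\overline B$. These neighborhoods depend on $\nu$ but all contain $\overline B$, so you may take $O_{w_0}=B$.

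\textbf{Drop the complexification detour.} For an arbitrary countable family a common complex neighborhood genuinely fails to exist, since the radii of convergence at $w_0$ may tend to $0$. It does exist in the paper's application, where every $h_\nu$ comes from the single operator-valued analytic map $F$; there the complex Noether property for coherent sheaves applies directly.

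\textbf{A repair closer to the paper.} Choose $r>0$ depending only on $Z_{N_0}=\{f_1=\dots=f_{N_0}=0\}$ so that $Z_{N_0}\cap B_r(w_0)$ is a finite union of connected analytic strata, each with $w_0$ in its closure. This uses stratification together with the local conic structure. Each $f_\nu$ vanishes on $Z_{N_0}$ near $w_0$, hence, by the identity theorem, on every stratum.
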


\begin{proof}
Let $\mathcal O_{w_0}$ be the local ring of germs of real analytic functions at
$w_0$.  This ring is Noetherian; this is a standard consequence of Cartan's
coherence theorem \cite{Cartan1950}.  See also Bochnak--Coste--Roy
\cite[Sec. 2.3]{BochnakCosteRoy1998} for the real analytic setting.  For
$N\ge1$, set
\[
        I_N=([f_1]_{w_0},\ldots,[f_N]_{w_0})\subset \mathcal O_{w_0}.
\]
The ideals $I_N$ form an increasing chain, hence there is $N_0$ such that
$I_N=I_{N_0}$ for all $N\ge N_0$.  Therefore, for every $\nu>N_0$, the germ
$[f_\nu]_{w_0}$ belongs to the ideal generated by
$[f_1]_{w_0},\ldots,[f_{N_0}]_{w_0}$.  Equivalently, the ideal of germs generated
by the full family $([f_\nu]_{w_0})_{\nu\ge1}$ is generated by
$[f_1]_{w_0},\ldots,[f_{N_0}]_{w_0}$.  Hence the analytic set germ defined by
the full family is the same as the analytic set germ defined by this finite
subfamily.  By equality of germs of sets, after shrinking to a neighborhood
$O_{w_0}\Subset V$, these two zero sets agree in $O_{w_0}$.
\end{proof}

\begin{remark}\label{rem:finite-zero-germ-local}
Only the Noetherianity of the local ring of real analytic germs at $w_0$ is used
in the proof.  The final shrinking step is just the passage from equality of set
germs to equality on a sufficiently small representative neighborhood.
\end{remark}

\subsection{The finite-measurement theorem}

\begin{theorem}[Finite scalar measurements]\label{thm:finite-measurements}
Assume the hypotheses of Theorem \ref{thm:abstract-holder}.  For every compact set $K\Subset U$, there exist finitely many pairs
\[
        (x_{i_1},y_{j_1}),\ldots,(x_{i_M},y_{j_M})
\]
from the chosen dense sets such that the finite measurement map
\[
        \cM(p)=\bigl(m_{i_\ell j_\ell}(p)\bigr)_{\ell=1}^M\in\R^M
\]
satisfies
\begin{equation}\label{eq:finite-holder}
        |R(p)-R(q)|
        \le C|\cM(p)-\cM(q)|^\theta,
        \qquad p,q\in K,
\end{equation}
for some constants $C>0$ and $\theta\in(0,1]$.
\end{theorem}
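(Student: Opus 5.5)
The plan is to combine Lemma \ref{lem:finite-zero-germ} with a compactness argument to pick out finitely many scalar measurements. The Łojasiewicz argument from the proof of Theorem \ref{thm:abstract-holder} is then applied to the finite sum of squares in place of the Hilbert--Schmidt norm.

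\textbf{Step 1: the common zero set.} Enumerate the pairs $(i,j)$ as a single sequence $\nu\ge1$. On $V=U\times U$ define
\[
f_\nu(p,q)=m_{i_\nu j_\nu}(p)-m_{i_\nu j_\nu}(q).
\]
Each $f_\nu$ is real analytic, since $B\mapsto (Bx_i,y_j)_Y$ is a bounded linear functional on $\cL(X,Y)$. By density of $(x_i)$ and $(y_j)$ and continuity of $F(p)-F(q)$, the common zero set $\bigcap_\nu\{f_\nu=0\}$ is exactly $Z=\{F(p)=F(q)\}$.

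\textbf{Step 2: finitely many measurements.} Fix a compact neighborhood $K'$ of $K\times K$ in $U\times U$. For each $w_0\in K'$, Lemma \ref{lem:finite-zero-germ} gives $N_0(w_0)$ and a neighborhood $O_{w_0}$ on which the first $N_0(w_0)$ equations cut out $Z$. Extract a finite subcover $O_{w_1},\dots,O_{w_k}$ of $K'$ and let $M=\max_s N_0(w_s)$. Since the list is an initial segment of one enumeration, taking more equations only shrinks the zero set while it stays $\supseteq Z$. Hence on $O:=\bigcup_s O_{w_s}$ we have
\[
\{f_1=\dots=f_M=0\}\cap O=Z\cap O.
\]
Using a single enumeration is what makes these local lists nested, so one $M$ works on all patches at once.

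\textbf{Step 3: Łojasiewicz on the finite data.} Set $\Psi=\sum_{\ell\le M} f_\ell^2=|\cM(p)-\cM(q)|^2$, which is analytic on the open set $O\supset K\times K$. Its zero set in $O$ is $Z\cap O$, which contains the diagonal and is therefore nonempty. Apply Theorem \ref{thm:lojasiewicz} with $W=O$ and compact set $K\times K$ to get
\[
\dist((p,q),Z\cap O)\le C_1\Psi^\beta .
\]
The function $H(p,q)=R(p)-R(q)$ is $C^1$ on $O$ and vanishes on $Z\cap O$ by \eqref{eq:exact-recovery}, and $Z\cap O$ is closed relative to $O$. Lemma \ref{lem:vanishing-distance} on $O$ then gives $|H|\le C_2\dist(\cdot,Z\cap O)$ on $K\times K$. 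Combining the two bounds yields \eqref{eq:finite-holder} with exponent $2\beta$. This exponent is reduced to $\theta\in(0,1]$ exactly as at the end of the proof of Theorem \ref{thm:abstract-holder}, using boundedness of $\cM(p)-\cM(q)$ on $K\times K$; the case where $\cM$ is constant on $K$ is handled the same way there.

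\textbf{Main obstacle.} The delicate point is Step 2. The local equality of zero sets holds only on the neighborhoods $O_{w_0}$, so the Łojasiewicz and vanishing lemmas must be applied on the open set $O$ rather than on all of $U\times U$. Globally, the finite system may have spurious zeros far from $K\times K$, where $R(p)\ne R(q)$ could occur. Working with $W=O$ and the relatively closed set $Z\cap O$ avoids this, because both Theorem \ref{thm:lojasiewicz} and Lemma \ref{lem:vanishing-distance} are stated for an arbitrary open $W$ containing the compact set.
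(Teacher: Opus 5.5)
Your proposal is correct and follows the paper's route. You identify the common zero set $Z$, use Lemma \ref{lem:finite-zero-germ} plus compactness to get finitely many equations cutting out $Z$ on an open $O\supset K\times K$, and then run the \L{}ojasiewicz and vanishing-lemma argument for $|\cM(p)-\cM(q)|^2$ on $O$. The only differences are cosmetic: you apply the lemma at every point rather than handling $w_0\notin Z$ by a single nonvanishing $h_{\nu_0}$, and you take nested initial segments with $M=\max_s N_0(w_s)$ rather than collecting all selected functions.
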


\begin{proof}
Enumerate the pairs $(i,j)$ by a single index $\nu\ge1$, and write
\[
        h_\nu(p,q)=m_{i_\nu j_\nu}(p)-m_{i_\nu j_\nu}(q),
        \qquad (p,q)\in U\times U .
\]
Because $F$ is analytic, each $h_\nu$ is real analytic.  The common zero set of
all the $h_\nu$ is
\[
        Z=\{(p,q)\in U\times U:F(p)=F(q)\}.
\]
Indeed, if all $h_\nu(p,q)$ vanish, then $(F(p)-F(q))x_i$ is orthogonal to
every $y_j$.  Density of $(y_j)$ gives $(F(p)-F(q))x_i=0$ for every $i$, and
density of $(x_i)$ gives $F(p)=F(q)$.

We claim that finitely many of the $h_\nu$ define the same zero set as the full
family in a neighborhood of $K\times K$.  Fix $w_0\in K\times K$.  If
$w_0\notin Z$, then at least one $h_{\nu_0}$ is nonzero at $w_0$; after
shrinking to a neighborhood $O_{w_0}$, this selected function is nonzero
throughout $O_{w_0}$, and both the selected zero set and the full common zero set
are empty there.  If $w_0\in Z$, Lemma \ref{lem:finite-zero-germ} applied to the
sequence $(h_\nu)_{\nu\ge1}$ gives $N(w_0)$ and a neighborhood $O_{w_0}$ on
which the equations $h_1=\cdots=h_{N(w_0)}=0$ have common zero set
$Z\cap O_{w_0}$.

The neighborhoods $O_{w_0}$ cover the compact set $K\times K$.  Choose a finite
subcover and collect all selected functions.  This gives a finite list
$h_1^*,\ldots,h_M^*$ and an open neighborhood $O\subset U\times U$ of
$K\times K$ such that
\[
        h_1^*(p,q)=\cdots=h_M^*(p,q)=0
        \quad\Longleftrightarrow\quad
        (p,q)\in Z,
        \qquad (p,q)\in O .
\]
Equivalently, for the corresponding finite measurement map $\cM$,
\[
        \cM(p)=\cM(q)
        \quad\Longleftrightarrow\quad
        F(p)=F(q),
        \qquad p,q\in K .
\]
By the exact recovery implication, $H(p,q)=R(p)-R(q)$ vanishes on the zero set
of the selected equations in $O$.

Now define
\[
        \Phi_M(p,q)=|\cM(p)-\cM(q)|^2,
        \qquad (p,q)\in O .
\]
This is a scalar real analytic function.  Its zero set in $O$ is exactly the
common zero set of the selected equations, and hence is contained in the zero
set of $H$.  It is nonempty because it contains the diagonal of $K\times K$.
Applying the \L{}ojasiewicz-distance argument from the proof of Theorem
\ref{thm:abstract-holder}, now to $\Phi_M$ on $K\times K\Subset O$, gives
\eqref{eq:finite-holder}.
\end{proof}

\begin{remark}\label{rem:finite-nonconstructive}
The theorem does not estimate the number $M$ of measurements.  The finite list depends on the compact set $K$, the chosen dense sets, and the analytic forward map; the constants also depend on the recovered quantity $R$.  The theorem does not identify boundary inputs that are experimentally convenient or stable under noise.  It only says that finite-dimensional analyticity and exact recovery force the existence of some finite set of scalar observations with a H\"older inverse on $K$.
\end{remark}

\section{The conductivity inverse problem}\label{sec:conductivity}

We first apply the abstract theorem to the local Neumann-to-Dirichlet map for piecewise constant anisotropic conductivities.

\subsection{Piecewise constant conductivities and local N-D data}

Let $n\ge3$ and let $\Omega\subset\R^n$ be a bounded domain with Lipschitz boundary.  Let
\[
        \overline\Omega=\bigcup_{j=1}^N\overline{D_j}
\]
be a known finite partition by connected, pairwise nonoverlapping open sets.  Let $\Sigma\subset\partial\Omega$ be the open boundary portion on which currents and voltages are measured.  The precise regularity, non-flatness, and chain assumptions on $(\Omega,\Sigma,(D_j))$ will be those appearing in the uniqueness theorem quoted in Subsection \ref{subsec:cond-uniqueness}.

For a tuple of positive definite symmetric matrices
\[
        A=(A_1,\ldots,A_N),
        \qquad A_j\in\Sym(n),
\]
define the piecewise constant anisotropic conductivity
\[
        \sigma_A(x)=\sum_{j=1}^N A_j\chi_{D_j}(x).
\]
The Neumann currents are taken in the localized zero-mean space
\[
        Y={}_0H^{-1/2}(\Sigma),
\]
where, following the notation of \cite{AlessandriniDeHoopGaburro2017}, this is the subspace of ${}_0H^{-1/2}(\partial\Omega)$ consisting of distributions supported in $\overline\Sigma$.  Equivalently, if $\Delta=\partial\Omega\setminus\Sigma$, then $Y$ is the annihilator of $H^{1/2}_{00}(\Delta)$ inside ${}_0H^{-1/2}(\partial\Omega)$.

For $\psi\in Y$, let $u_A^\psi\in H^1(\Omega)/\R$ be the weak solution of the Neumann problem
\[
        \int_\Omega \sigma_A\nabla u_A^\psi\cdot\nabla v\,dx
        =\dual{\psi}{\Tr v}_{\partial\Omega},
        \qquad v\in H^1(\Omega)/\R .
\]
The local Neumann-to-Dirichlet map is the bounded operator
\[
        \cN_A^\Sigma:Y\to Y^*,
\]
defined by
\[
        \dual{\cN_A^\Sigma\psi}{\eta}
        =\int_\Omega \sigma_A\nabla u_A^\psi\cdot\nabla u_A^\eta\,dx,
        \qquad \psi,\eta\in Y .
\]
This is the weak formulation of the local N-D map in \cite[Definition 2.6]{AlessandriniDeHoopGaburro2017}, with the trace of $u_A^\psi$ tested only against currents supported in the measured patch.

\subsection{The ADHG uniqueness theorem}\label{subsec:cond-uniqueness}

The inverse-problem input is the following uniqueness theorem.

\begin{theorem}[Alessandrini--de Hoop--Gaburro]\label{thm:ADHG-blackbox}
Assume that $\Omega$, $\Sigma$, and the known partition $(D_j)_{j=1}^N$ satisfy the hypotheses of \cite[Theorem 2.1]{AlessandriniDeHoopGaburro2017}.  For clarity, the relevant geometric assumptions are the following, in the notation of the cited theorem.  The boundary $\partial\Omega$ and the cell boundaries $\partial D_j$ are Lipschitz; there is a boundary cell, denoted $D_1$, such that $\partial D_1\cap\Sigma$ contains a non-flat $C^{1,\alpha}$ portion; and for each target cell $D_i$ there is a chain
\[
        D_{j_1}=D_1,
        \quad D_{j_K}=D_i,
\]
such that the successive accumulated unions and complements appearing in the chain are Lipschitz domains and each interface $\partial D_{j_k}\cap\partial D_{j_{k-1}}$ contains a non-flat $C^{1,\alpha}$ portion, with the first such portion contained in the measured patch and the later ones lying in the interior.  The theorem below is a black-box restatement of the cited uniqueness result, and all quantitative constants and geometric terminology are meant in the sense of that paper.  Let
\[
        \sigma^{(k)}(x)=\sum_{j=1}^N \sigma_j^{(k)}\chi_{D_j}(x),
        \qquad k=1,2,
\]
where the matrices $\sigma_j^{(k)}\in\Sym(n)$ are positive definite and satisfy common uniform ellipticity bounds.  If
\[
        \cN_{\sigma^{(1)}}^\Sigma=\cN_{\sigma^{(2)}}^\Sigma,
\]
then
\[
        \sigma^{(1)}=\sigma^{(2)}\quad\text{in }\Omega,
\]
equivalently $\sigma_j^{(1)}=\sigma_j^{(2)}$ for $j=1,\ldots,N$.
\end{theorem}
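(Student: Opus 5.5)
This statement is a black-box restatement of \cite[Theorem 2.1]{AlessandriniDeHoopGaburro2017}, so the plan is not to reprove uniqueness but to check that the cited result applies verbatim to the setting and notation fixed in this section. The first step is to match the data. I will verify that the operator \(\cN_A^\Sigma:Y\to Y^*\) defined above by the bilinear form \(\dual{\cN_A^\Sigma\psi}{\eta}=\int_\Omega\sigma_A\nabla u_A^\psi\cdot\nabla u_A^\eta\,dx\) coincides with the local Neumann-to-Dirichlet map of \cite[Definition 2.6]{AlessandriniDeHoopGaburro2017}. Testing the weak Neumann problem for \(u_A^\eta\) with \(v=u_A^\psi\) gives \(\dual{\eta}{\Tr u_A^\psi}\). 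Hence the form equals the pairing of the trace of \(u_A^\psi\) against currents in \(Y={}_0H^{-1/2}(\Sigma)\), that is, the restriction of the full N-D map to currents supported on \(\overline\Sigma\), modulo constants. Equality of the two operators in the present sense is therefore equivalent to equality of the local N-D maps in the cited sense.

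The second step is to match the coefficient class and the geometry. Each \(\sigma^{(k)}\) is piecewise constant on the known partition, with symmetric positive definite values and common ellipticity bounds. This is exactly the finite-dimensional anisotropic class considered in the cited paper, where the anisotropic conductivity is of the form \(\sum_j \sigma_j\chi_{D_j}\) with constant matrices. I must also check that the cited theorem concludes full identification \(\sigma^{(1)}=\sigma^{(2)}\), and not identification modulo a diffeomorphism gauge. This should hold because the known partition together with the non-flat interfaces rules out the gauge. The geometric hypotheses listed in the statement (Lipschitz cells, a non-flat \(C^{1,\alpha}\) portion of \(\partial D_1\cap\Sigma\), and chains with non-flat \(C^{1,\alpha}\) interfaces) are taken to be those of \cite[Theorem 2.1]{AlessandriniDeHoopGaburro2017} by assumption. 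Once the data and the class are matched, the conclusion \(\sigma_j^{(1)}=\sigma_j^{(2)}\) for all \(j\) follows directly. The equivalence with \(\sigma^{(1)}=\sigma^{(2)}\) a.e.\ in \(\Omega\) holds because the \(D_j\) are nonempty open and pairwise disjoint.

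The main obstacle is the bookkeeping in the first step, namely that the quotient spaces and the localization conventions agree. Neumann solutions are defined only in \(H^1(\Omega)/\R\), and the zero-mean condition must be built into \(Y\) so that the Neumann problem is solvable. Describing \(Y\) as the annihilator of \(H^{1/2}_{00}(\Delta)\) must also match the cited definition of distributions supported in \(\overline\Sigma\). I would settle this by writing both definitions side by side and checking that the bilinear forms agree on \(Y\times Y\).
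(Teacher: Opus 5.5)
Your proposal is correct and matches the paper. The paper gives no independent proof of this statement: it imports it verbatim from \cite[Theorem 2.1]{AlessandriniDeHoopGaburro2017} and simply asserts that the weak form defining $\cN_A^\Sigma$ is the local N-D map of \cite[Definition 2.6]{AlessandriniDeHoopGaburro2017}. Your consistency checks (data, coefficient class, and the cellwise/a.e.\ equivalence) are exactly the bookkeeping the paper leaves implicit. The one detail worth stating explicitly is that identifying $\dual{\cN_A^\Sigma\psi}{\eta}$ with $\dual{\eta}{\Tr u_A^\psi}$ uses the symmetry of $\sigma_A$, which holds because each $A_j\in\Sym(n)$.
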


Although Theorem \ref{thm:ADHG-blackbox} is stated with fixed ellipticity bounds, it gives the exact recovery implication on the whole open cone of positive definite tuples: for any two tuples $A,B$ one chooses ellipticity bounds containing both and applies the theorem.

\subsection{Analyticity of the local N-D map}\label{subsec:cond-reduction}

Let
\[
        U_{\rm cond}
        =\{A=(A_1,\ldots,A_N)\in\Sym(n)^N:
        A_j>0\text{ for every }j\}.
\]
For fixed \(0<\lambda<\Lambda<\infty\), set
\[
        K_{\lambda,\Lambda}
        =\{A\in U_{\rm cond}:\lambda I\le A_j\le\Lambda I,
        \ j=1,\ldots,N\}.
\]
Then \(K_{\lambda,\Lambda}\Subset U_{\rm cond}\).  We take
\[
        F(A)=\cN_A^\Sigma,
        \qquad
        R(A)=A .
\]
Theorem~\ref{thm:ADHG-blackbox} gives
\[
        F(A)=F(B)\quad\Longrightarrow\quad A=B,
        \qquad A,B\in U_{\rm cond}.
\]
It remains to verify the analytic dependence of \(F\) on \(A\).

\begin{lemma}[Analyticity of the local N-D map]\label{lem:cond-analytic}
The map
\[
        A\mapsto \cN_A^\Sigma
\]
is real analytic from \(U_{\rm cond}\) into \(\cL(Y,Y^*)\).
\end{lemma}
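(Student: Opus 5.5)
The plan is to write \(\cN_A^\Sigma\) as a composition of bounded linear maps and one operator inversion, and then invoke the closure properties recorded in the preliminaries. Let \(V=H^1(\Omega)/\R\), a Hilbert space under the seminorm \(\|\nabla v\|_{L^2}\), which is a norm on the quotient by Poincar\'e's inequality on the Lipschitz domain \(\Omega\). For \(A\in U_{\rm cond}\) define the stiffness operator \(L_A:V\to V^*\) by
\[
        \dual{L_Av}{w}=\int_\Omega \sigma_A\nabla v\cdot\nabla w\,dx
        =\sum_{j=1}^N\int_{D_j}A_j\nabla v\cdot\nabla w\,dx .
\]
The map \(A\mapsto L_A\) is the restriction to \(U_{\rm cond}\) of a bounded linear map \(\Sym(n)^N\to\cL(V,V^*)\), since \(\|L_A\|\le \sum_j|A_j|\). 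It is therefore real analytic, indeed affine.

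The second step is invertibility. For \(A\in U_{\rm cond}\), each \(A_j\) satisfies \(A_j\ge\lambda_A I\) with \(\lambda_A=\min_j\lambda_{\min}(A_j)>0\). The form is then coercive on \(V\), and Lax--Milgram shows that \(L_A\) is an isomorphism \(V\to V^*\). Inversion is real analytic on the open set of isomorphisms, so \(A\mapsto L_A^{-1}\in\cL(V^*,V)\) is real analytic on \(U_{\rm cond}\).

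Next I will encode the boundary data. The trace map \(\Tr:H^1(\Omega)\to H^{1/2}(\partial\Omega)\) is bounded, so for \(\psi\in Y\subset{}_0H^{-1/2}(\partial\Omega)\) the functional \(v\mapsto\dual{\psi}{\Tr v}\) is well defined on \(V\). The zero-mean condition makes it vanish on constants, so it descends to the quotient. This gives a bounded linear map \(T:Y\to V^*\), namely \(T\psi=\dual{\psi}{\Tr\,\cdot\,}\), and \(u_A^\psi=L_A^{-1}T\psi\).

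Finally I will rewrite the N-D map as a triple product. The definition gives
\[
        \dual{\cN_A^\Sigma\psi}{\eta}=\dual{L_Au_A^\psi}{u_A^\eta}=\dual{T\psi}{L_A^{-1}T\eta}.
\]
By symmetry of \(L_A\), with \(T^*:V^{**}=V\to Y^*\), this says \(\cN_A^\Sigma=T^*L_A^{-1}T\). The map \(S\mapsto T^*ST\) is bounded linear from \(\cL(V^*,V)\) to \(\cL(Y,Y^*)\), so the composition is real analytic in operator norm.

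The only point needing care is the functional setup: that \(V\) is Hilbert, that \(T\) is well defined on the quotient, and that the identity \(\cN_A^\Sigma=T^*L_A^{-1}T\) matches the paper's weak definition, including the identification \(V^{**}=V\). I expect this to be routine. The analytic content is entirely the Neumann-series analyticity of inversion together with the linear dependence of \(L_A\) on \(A\).
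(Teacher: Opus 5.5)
Your proposal is correct and matches the paper's proof: it uses the same factorization $\cN_A^\Sigma=T^\sharp L_A^{-1}T$ on $V=H^1(\Omega)/\R$, the same Lax--Milgram invertibility of the operator $L_A$ (which depends linearly on $A$), and the same analyticity of inversion. Your explicit appeal to the symmetry of $\sigma_A$, used to pass from $\dual{T\psi}{L_A^{-1}T\eta}$ to the adjoint form, is the same step the paper uses implicitly when it identifies $a_A(u_A^\eta,u_A^\psi)$ with $\dual{\cN_A^\Sigma\psi}{\eta}$.
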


\begin{proof}
Let
\[
        V=H^1(\Omega)/\R
\]
with the quotient Hilbert norm induced by the gradient.  For \(A\in U_{\rm cond}\),
define
\[
        a_A(u,v)=\int_\Omega \sigma_A\nabla u\cdot\nabla v\,dx,
        \qquad u,v\in V,
\]
and let
\[
        L_A:V\to V^*,
        \qquad
        \dual{L_Au}{v}=a_A(u,v).
\]
The map \(A\mapsto L_A\) is linear.  Indeed,
\[
        \dual{L_Au}{v}
        =\sum_{j=1}^N\sum_{r,s=1}^n
        (A_j)_{rs}\int_{D_j}\partial_su\,\partial_rv\,dx .
\]
For each \(A\in U_{\rm cond}\) there is \(\lambda_A>0\) such that
\(A_j\ge \lambda_A I\) for all \(j\).  Hence
\[
        a_A(u,u)\ge \lambda_A\|\nabla u\|_{L^2(\Omega)}^2,
\]
and \(L_A\) is an isomorphism \(V\to V^*\) by Lax--Milgram.  Since inversion is
analytic on the open set of bounded isomorphisms, \(A\mapsto L_A^{-1}\) is real
analytic as a map into \(\cL(V^*,V)\).

Define the fixed boundary operator
\[
        T:Y\to V^*,
        \qquad
        \dual{T\psi}{v}=\dual{\psi}{\Tr v}_{\partial\Omega}.
\]
This is well defined on the quotient space \(V=H^1(\Omega)/\R\), because \(\psi\in{}_0H^{-1/2}(\partial\Omega)\) has zero mean and therefore annihilates constants.  The map \(T\) is bounded, and the solution of the Neumann problem with current
\(\psi\) is
\[
        u_A^\psi=L_A^{-1}T\psi .
\]
Let \(T^\sharp:V\to Y^*\) be the adjoint operator defined by
\[
        \dual{T^\sharp u}{\eta}=\dual{T\eta}{u}.
\]
For \(\psi,\eta\in Y\),
\[
\begin{aligned}
        \dual{T^\sharp L_A^{-1}T\psi}{\eta}
        &=\dual{T\eta}{L_A^{-1}T\psi}  \\
        &=a_A(u_A^\eta,u_A^\psi)       \\
        &=\dual{\cN_A^\Sigma\psi}{\eta}.
\end{aligned}
\]
Thus
\[
        \cN_A^\Sigma=T^\sharp L_A^{-1}T .
\]
The two outer factors are fixed bounded operators and the middle factor depends
real analytically on \(A\).  This proves the claim.
\end{proof}

\subsection{Stability corollaries}

\begin{corollary}[H\"older stability for local N-D anisotropic conductivity]\label{cor:cond-holder}
Assume the hypotheses of Theorem \ref{thm:ADHG-blackbox}.  Fix any Euclidean norm on the space of symmetric matrices; changing this norm only changes the constants below.  For every $0<\lambda<\Lambda<\infty$ there exist constants $C>0$ and $\theta\in(0,1]$ such that
\begin{equation}\label{eq:cond-holder}
        \max_{1\le j\le N}\|A_j-B_j\|
        \le C
        \|\cN_A^\Sigma-\cN_B^\Sigma\|_{\cL(Y,Y^*)}^\theta
\end{equation}
for all $A,B\in K_{\lambda,\Lambda}$.
\end{corollary}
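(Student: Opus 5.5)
The plan is to apply Theorem~\ref{thm:abstract-holder} with $U=U_{\rm cond}$, viewed as an open subset of $\Sym(n)^N\cong\R^{m}$ with $m=Nn(n+1)/2$, with $K=K_{\lambda,\Lambda}$, $R(A)=A$ (linear, hence $C^1$), and the measurement map $F(A)=\cN_A^\Sigma$. Since the data take values in $\cL(Y,Y^*)$, I will use the Riesz reduction described at the start of Section~\ref{sec:abstract}. Set $\widetilde F=\mathcal R_Y^{-1}F:U\to\cL(Y,Y)$, where $\mathcal R_Y:Y\to Y^*$ is the Riesz isomorphism of the Hilbert space $Y={}_0H^{-1/2}(\Sigma)$. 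This $Y$ is a closed subspace of the separable Hilbert space ${}_0H^{-1/2}(\partial\Omega)$, so it is itself a real separable Hilbert space, and Theorem~\ref{thm:abstract-holder} applies with $X=Y$.

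\textbf{Step 1: hypotheses.} By Lemma~\ref{lem:cond-analytic}, $A\mapsto\cN_A^\Sigma$ is real analytic into $\cL(Y,Y^*)$. Composition with the fixed bounded linear map $B\mapsto\mathcal R_Y^{-1}B$ preserves real analyticity, so $\widetilde F$ is analytic in the operator norm. Because $\mathcal R_Y^{-1}$ is injective, $\widetilde F(A)=\widetilde F(B)$ holds exactly when $\cN_A^\Sigma=\cN_B^\Sigma$. For $A,B\in U_{\rm cond}$, choose common ellipticity bounds containing both tuples, as noted after Theorem~\ref{thm:ADHG-blackbox}. That theorem then gives $A=B$, which is the exact recovery implication~\eqref{eq:exact-recovery}.

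\textbf{Step 2: compactness.} I need to check that $K_{\lambda,\Lambda}$ is compact and contained in $U_{\rm cond}$. It is closed and bounded in $\Sym(n)^N$, since the conditions $\lambda I\le A_j\le\Lambda I$ are closed. Every element satisfies $A_j\ge\lambda I>0$, so $K_{\lambda,\Lambda}\subset U_{\rm cond}$.

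\textbf{Step 3: conclusion and norm comparison.} Theorem~\ref{thm:abstract-holder} now yields
\[
|A-B|\le C\|\widetilde F(A)-\widetilde F(B)\|_{\cL(Y,Y)}^\theta
\]
for $A,B\in K_{\lambda,\Lambda}$, where $|\cdot|$ is the Euclidean norm on $\R^m$. Since $\mathcal R_Y$ is an isometry, $\|\widetilde F(A)-\widetilde F(B)\|_{\cL(Y,Y)}=\|\cN_A^\Sigma-\cN_B^\Sigma\|_{\cL(Y,Y^*)}$. On the left side, $\max_j\|A_j-B_j\|$ is bounded by a constant times $|A-B|$, by equivalence of norms on a finite-dimensional space. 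This only changes $C$ and gives~\eqref{eq:cond-holder}.

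The only point needing care is the exact recovery on the whole open cone rather than only on a fixed ellipticity class. This matters because Theorem~\ref{thm:abstract-holder} requires~\eqref{eq:exact-recovery} on all of $U$ (it is used through the zero set $Z$ in $U\times U$ near $K\times K$). That gap is already handled by the remark following Theorem~\ref{thm:ADHG-blackbox}. Everything else is a routine verification.
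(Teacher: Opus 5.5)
Your proposal is correct and follows the paper's own proof. Both apply Theorem~\ref{thm:abstract-holder} to the inverse-Riesz-composed map $A\mapsto\mathcal R_Y^{-1}\cN_A^\Sigma$ with $R(A)=A$, using Lemma~\ref{lem:cond-analytic} for analyticity, the remark after Theorem~\ref{thm:ADHG-blackbox} for exact recovery on all of $U_{\rm cond}$, and equivalence of norms on $\Sym(n)^N$. Your additional checks fill in details the paper leaves implicit: that $K_{\lambda,\Lambda}$ is compact and contained in $U_{\rm cond}$, that $Y$ is a separable Hilbert space, and that the Riesz map is an isometry.
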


\begin{proof}
The preceding subsection gives an analytic map $F(A)=\cN_A^\Sigma$ with values
in $\cL(Y,Y^*)$ and the exact implication $F(A)=F(B)\Rightarrow A=B$ on
$U_{\rm cond}$.  Composing the range with the inverse Riesz isomorphism
$Y^*\to Y$ puts the data in the Hilbert-valued form of Theorem
\ref{thm:abstract-holder}; this fixed isomorphism preserves the exact recovery
implication and changes the operator norm only by a fixed equivalence.  Applying
the theorem on $K_{\lambda,\Lambda}$ gives a H\"older estimate in any Euclidean
norm on $\Sym(n)^N$.  Since all norms on this finite-dimensional space are
equivalent, the estimate is equivalent to \eqref{eq:cond-holder}.
\end{proof}

\begin{corollary}[Finite local N-D measurements]\label{cor:cond-finite}
Under the same assumptions, there exist finitely many boundary currents
\[
        \psi_1,\ldots,\psi_M,
        \eta_1,\ldots,\eta_M\in Y
\]
The currents and tests may be chosen from any prescribed countable dense subset
of $Y$.  Moreover, there are constants $C>0$, $\theta\in(0,1]$ such that
\[
        \max_{1\le j\le N}\|A_j-B_j\|
        \le C
        \left(\sum_{\ell=1}^M
        \left|
        \dual{(\cN_A^\Sigma-\cN_B^\Sigma)\psi_\ell}{\eta_\ell}
        \right|^2\right)^{\theta/2}
\]
for all $A,B\in K_{\lambda,\Lambda}$.
\end{corollary}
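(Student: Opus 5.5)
The plan is to apply Theorem~\ref{thm:finite-measurements} to the same data as in Corollary~\ref{cor:cond-holder}: $F(A)=\cN_A^\Sigma$, $R(A)=A$, $U=U_{\rm cond}$, $K=K_{\lambda,\Lambda}$. The only real issue is matching the $Y^*$-valued data, and the prescribed dense family of currents and tests, to the Hilbert-valued form of the theorem.

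First I will fix a countable dense subset $(\psi^{(i)})_{i\ge1}$ of $Y$. This serves as the input family $(x_i)$ in $X=Y$. Since $Y$ is separable, such a set exists; the corollary allows it to be prescribed, and the same set will serve as the test set. For the test family I will set $y_j=\mathcal R_Y^{-1}$ applied to nothing new: I apply Theorem~\ref{thm:finite-measurements} to $\widetilde F=\mathcal R_Y^{-1}F:U\to\cL(Y,Y)$, which is real analytic by Lemma~\ref{lem:cond-analytic}, since composing with a fixed bounded linear map preserves analyticity. Exact recovery transfers because $\mathcal R_Y^{-1}$ is injective. For the test family in $Y$ I take $y_j=\psi^{(j)}$. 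Then
\[
m_{ij}(A)=(\widetilde F(A)\psi^{(i)},\psi^{(j)})_Y=\dual{\cN_A^\Sigma\psi^{(i)}}{\psi^{(j)}},
\]
by the defining property of the Riesz map. This is exactly the duality-pairing version described before Lemma~\ref{lem:finite-zero-germ}. If different prescribed dense sets are wanted for currents and tests, the same argument works verbatim with two families.

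Theorem~\ref{thm:finite-measurements} then yields pairs $(i_\ell,j_\ell)$, $\ell=1,\dots,M$, and constants $C,\theta$ such that $|A-B|\le C|\cM(A)-\cM(B)|^\theta$ on $K_{\lambda,\Lambda}$. Here $|\cdot|$ is the Euclidean norm on $\Sym(n)^N$ and $\cM(A)-\cM(B)$ has entries $\dual{(\cN_A^\Sigma-\cN_B^\Sigma)\psi_\ell}{\eta_\ell}$, with $\psi_\ell=\psi^{(i_\ell)}$ and $\eta_\ell=\psi^{(j_\ell)}$. Linearity of the pairing in the operator gives this form. Repetitions among the $\psi_\ell$ or $\eta_\ell$ are harmless.

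Finally, I replace $|A-B|$ by $\max_j\|A_j-B_j\|$ using equivalence of norms on the finite-dimensional space $\Sym(n)^N$, absorbing the resulting factor into $C$. I also need $K_{\lambda,\Lambda}\Subset U_{\rm cond}$, which was noted above: it is closed and bounded, and every element has eigenvalues at least $\lambda>0$.

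The only mildly delicate step is checking that the Riesz conjugation turns the inner-product matrix elements into the stated duality pairings. There is no genuine obstacle beyond this bookkeeping.
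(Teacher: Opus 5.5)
Your proposal is correct and follows the paper's proof, which consists of applying Theorem~\ref{thm:finite-measurements} to $A\mapsto\cN_A^\Sigma$ with $R(A)=A$ on $K_{\lambda,\Lambda}$. The explicit bookkeeping you add is exactly what the paper leaves implicit (and spells out in the proof of Corollary~\ref{cor:cond-holder}): composing with $\mathcal R_Y^{-1}$, the identity $(\mathcal R_Y^{-1}\cN_A^\Sigma\psi,\eta)_Y=\dual{\cN_A^\Sigma\psi}{\eta}$, and the final equivalence of norms on $\Sym(n)^N$.
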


\begin{proof}
Apply Theorem \ref{thm:finite-measurements} to the analytic map $A\mapsto\cN_A^\Sigma$ and the recovered quantity $R(A)=A$.
\end{proof}

\section{The elasticity inverse problem}\label{sec:elasticity}

We next consider localized Dirichlet-to-Neumann data for static anisotropic elasticity.

\subsection{Piecewise homogeneous elasticity and localized D-N data}\label{subsec:elasticity-data}

Let $\Omega\subset\R^3$ be a bounded Lipschitz domain with a known finite partition
\[
        \overline\Omega=\bigcup_{\alpha=1}^N\overline{D_\alpha}
\]
by open connected subdomains.  Let $\Gamma\subset\partial\Omega$ be the measured boundary patch contained in the boundary of a distinguished boundary cell $D_{\alpha_0}$.

An elasticity tensor $\mathbb C$ is a fourth-order tensor satisfying the usual elasticity symmetries.  In the notation of \cite{CarsteaHondaNakamura2018}, these include
\[
        C_{ijkl}=C_{ijlk},
        \qquad
        C_{ijkl}=C_{klij}.
\]
Together these imply the other minor symmetry, since
\[
        C_{jikl}=C_{klji}=C_{klij}=C_{ijkl}.
\]
The strong convexity condition is that there exists $\lambda>0$ such that
\[
        \mathbb C E:E\ge \lambda |E|^2
        \qquad\text{for every }E\in\Sym(3).
\]
A piecewise homogeneous tensor has the form
\[
        \mathbb C(x)=\sum_{\alpha=1}^N\mathbb C^\alpha\chi_{D_\alpha}(x),
\]
where each $\mathbb C^\alpha$ is a constant strongly convex elasticity tensor.

For a vector field $u:\Omega\to\R^3$, let $Du$ denote the full gradient matrix.  The equation in \cite{CarsteaHondaNakamura2018} is
\[
        L_{\mathbb C}u=\operatorname{div}(\mathbb C::Du)=0
        \quad\text{in }\Omega,
\]
with Dirichlet boundary data on $\partial\Omega$.  The localized displacement space is
\[
        X=H^{1/2}_{\operatorname{co}}(\Gamma;\R^3),
\]
the closure of boundary vector fields supported compactly in $\Gamma$, viewed as a closed subspace of $H^{1/2}(\partial\Omega;\R^3)$ by zero extension.

For $f\in X$, let $u_{\mathbb C}^f\in H^1(\Omega;\R^3)$ be the weak solution with boundary trace $f$.  The localized Dirichlet-to-Neumann map is
\[
        \Lambda_{\mathbb C}^\Gamma:X\to X^*,
\]
defined by
\[
        \dual{\Lambda_{\mathbb C}^\Gamma f}{g}
        =\int_\Omega Du_{\mathbb C}^f:(\mathbb C::Dv)\,dx,
        \qquad f,g\in X,
\]
where $v\in H^1(\Omega;\R^3)$ is any function whose trace is $g$.  This pairing is independent of the chosen extension $v$ by the weak equation.  Equivalently, $\Lambda_{\mathbb C}^\Gamma f$ is the traction $(\mathbb C::Du_{\mathbb C}^f)\nu$ restricted to the measured patch and paired with localized boundary displacements.

For the analytic reduction in Subsection \ref{subsec:elastic-reduction}, it is convenient to use the symmetric gradient
\[
        \varepsilon(u)=\frac{\nabla u+(\nabla u)^T}{2}
\]
and the bilinear form
\[
        a_{\mathbb C}(u,v)=
        \int_\Omega \mathbb C(x)\varepsilon(u):\varepsilon(v)\,dx .
\]
The tensor symmetries imply that this symmetric-gradient formulation agrees with the full-gradient weak form above.  Indeed, if $A,B\in\R^{3\times3}$ and
\[
        \mathbb C A:B=\sum_{i,j,k,l=1}^3 C_{ijkl}A_{ij}B_{kl},
\]
then the minor symmetries give
\[
        \sum_{i,j,k,l}C_{ijkl}A_{ji}B_{kl}
        =\sum_{i,j,k,l}C_{jikl}A_{ij}B_{kl}
        =\sum_{i,j,k,l}C_{ijkl}A_{ij}B_{kl},
\]
and similarly in the second pair of indices.  Hence
\[
        \mathbb C A:B=\mathbb C A^{\rm sym}:B
        =\mathbb C A^{\rm sym}:B^{\rm sym}.
\]
Taking $A=Du$ and $B=Dv$ gives the symmetric-gradient form.  On $H^1_0(\Omega;\R^3)$, strong convexity and Korn's inequality give coercivity; see, for example, Ciarlet \cite{Ciarlet1988}.

\subsection{The CHN chain uniqueness theorem}\label{subsec:elastic-uniqueness}

The exact input is the following layer-stripping theorem.

\begin{theorem}[C\^arstea--Honda--Nakamura]\label{thm:CHN-chain}
Let two piecewise homogeneous anisotropic elasticity tensors be constant on the
same known Lipschitz subdomains $D_\alpha$ and satisfy the symmetry and strong
convexity assumptions above.  Assume the geometric hypotheses of
\cite[Theorem 2.1]{CarsteaHondaNakamura2018}.  In the known-partition form used
here, this means the following.  The measured patch $\Gamma$ lies on a
distinguished boundary cell $D_{\alpha_0}$, and a target cell $D_\alpha$ is
connected to $D_{\alpha_0}$ by an admissible chain
\[
        D_{\alpha_1},D_{\alpha_2},\ldots,D_{\alpha_L},
        \qquad
        \alpha_1=\alpha_0,
        \qquad
        \alpha_L=\alpha,
\]
where successive cells meet along interface patches satisfying the curvedness
condition in the cited theorem.  More precisely, the first patch is contained in the measured
boundary patch, and each later interface patch is an open curved portion of
$\overline D_{\alpha_{\ell-1}}\cap\overline D_{\alpha_\ell}$, with the required
Lipschitz separation assumptions for the accumulated unions and complements.

If
\[
        \Lambda_{\mathbb C^{(1)}}^\Gamma
        =\Lambda_{\mathbb C^{(2)}}^\Gamma,
\]
then
\[
        \mathbb C^{(1),\alpha}
        =\mathbb C^{(2),\alpha}
\]
for every cell $D_\alpha$ reachable from $D_{\alpha_0}$ by such an admissible
chain.  Consequently, if every cell is reachable, then
\[
        \mathbb C^{(1)}=\mathbb C^{(2)}
        \quad\text{in }\Omega .
\]
\end{theorem}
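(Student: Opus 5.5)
The plan is to treat Theorem~\ref{thm:CHN-chain} as the known-partition specialization of \cite[Theorem 2.1]{CarsteaHondaNakamura2018} rather than to reprove it. I would check that the objects here coincide with those in the cited paper, and then run the layer-stripping induction along an admissible chain exactly as it is organized there. The first task is bookkeeping: the tensors $\mathbb C^{(1)},\mathbb C^{(2)}$ are piecewise constant on the same known Lipschitz cells, so they are piecewise homogeneous in the sense of \cite{CarsteaHondaNakamura2018}. The symmetries $C_{ijkl}=C_{ijlk}=C_{klij}$ and strong convexity are the standing hypotheses there. The localized map $\Lambda^\Gamma_{\mathbb C}:X\to X^*$ on $X=H^{1/2}_{\operatorname{co}}(\Gamma;\R^3)$ is the weak localized D-N map used there, by the full-gradient/symmetric-gradient equivalence verified in Subsection~\ref{subsec:elasticity-data}. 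Hence the hypothesis $\Lambda^\Gamma_{\mathbb C^{(1)}}=\Lambda^\Gamma_{\mathbb C^{(2)}}$ is precisely the hypothesis of the cited theorem.

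The core of the argument is an induction on the chain length $L$ for a fixed reachable cell $D_\alpha$ with chain $D_{\alpha_1},\ldots,D_{\alpha_L}$.
\begin{enumerate}
\item \emph{Base step ($\ell=1$).} The measured patch $\Gamma$ lies on $\partial D_{\alpha_0}$ and contains a curved portion. A boundary-determination argument on this patch then gives $\mathbb C^{(1),\alpha_0}=\mathbb C^{(2),\alpha_0}$. The argument uses localized oscillating or singular solutions concentrated at a boundary point, whose leading asymptotics recover the constant tensor from the D-N data, the curvedness ruling out the degeneracy of a flat patch.
\item \emph{Inductive step.} Suppose the tensors agree on $D_{\alpha_1},\ldots,D_{\alpha_{\ell-1}}$. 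Let $G_{\ell-1}$ be the accumulated union of these cells; its Lipschitz separation from the complement is assumed. Because the tensors agree on $G_{\ell-1}$, equality of the data on $\Gamma$ propagates to the interior interface patch on $\overline D_{\alpha_{\ell-1}}\cap\overline D_{\alpha_\ell}$. This is done by an Alessandrini-type identity tested against solutions with poles approaching the interface (Green's functions or Runge-approximated solutions), using unique continuation for the elasticity system with piecewise constant coefficients across $G_{\ell-1}$.
\item \emph{Repetition.} Applying the local determination argument of step 1 at the curved interface patch recovers $\mathbb C^{\alpha_\ell}$. The last step gives the target cell $D_\alpha$.
\end{enumerate}

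The consequence for full recovery is immediate: if every cell is reachable, agreement holds on each $D_\alpha$, hence $\mathbb C^{(1)}=\mathbb C^{(2)}$ a.e.\ in $\Omega$.

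The main obstacle is the inductive transfer of the data to the interior interface and the local determination there. For anisotropic elasticity, unique continuation and the construction of localized singular solutions near a curved Lipschitz/$C^{1,\alpha}$ interface are delicate. Here I would rely entirely on the corresponding steps of \cite{CarsteaHondaNakamura2018}, only verifying that the geometric assumptions listed in the statement match theirs.
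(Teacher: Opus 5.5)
Your proposal matches the paper, which also does not reprove this result: it presents it as a restatement of \cite[Theorem 2.1]{CarsteaHondaNakamura2018}, noting that equality on the terminal cell of each admissible chain is the step preceding that theorem's global conclusion, and that the global statement follows by applying it to every cell. Your sketch of the base and inductive steps is extra, heuristic commentary on the cited proof rather than part of the argument, and it does no harm because you defer to \cite{CarsteaHondaNakamura2018} for those steps.
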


Let $\cR\subset\{1,\ldots,N\}$ denote the set of cells reachable from the measured boundary cell by such curved-interface chains.  The theorem immediately implies the exact partial recovery statement
\[
        \Lambda_{\mathbb C^{(1)}}^\Gamma
        =\Lambda_{\mathbb C^{(2)}}^\Gamma
        \quad\Longrightarrow\quad
        \mathbb C^{(1),\alpha}=\mathbb C^{(2),\alpha}
        \quad\text{for every }\alpha\in\cR .
\]
If all cells are reachable, this is full uniqueness.  The partial form just stated is the part of \cite[Theorem 2.1]{CarsteaHondaNakamura2018} preceding its global conclusion: equality is first obtained on the terminal cell of a prescribed admissible chain, and the global statement follows by applying this to every cell.

\subsection{Analyticity of the localized elasticity D-N map}\label{subsec:elastic-reduction}

Let $U_{\rm el}$ be the open subset of the finite-dimensional vector space of tensor tuples
\[
        \mathbb C=(\mathbb C^1,\ldots,\mathbb C^N)
\]
consisting of tuples satisfying the elasticity symmetries and strong convexity in every cell.  For fixed $0<\lambda<\Lambda<\infty$, define
\[
K_{\lambda,\Lambda}^{\rm el}
=
\left\{\mathbb C\in U_{\rm el}:
\begin{gathered}
\lambda |E|^2\le \mathbb C^\alpha E:E\le\Lambda |E|^2,\\
E\in\Sym(3),\quad \alpha=1,\ldots,N
\end{gathered}
\right\}.
\]
With $\cR$ denoting the set of reachable cells from Theorem \ref{thm:CHN-chain}, set
\[
        F(\mathbb C)=\Lambda_{\mathbb C}^\Gamma,
        \qquad
        R(\mathbb C)=(\mathbb C^\alpha)_{\alpha\in\cR} .
\]
The exact consequence of Theorem \ref{thm:CHN-chain} is
\[
        F(\mathbb C)=F(\widetilde{\mathbb C})
        \quad\Longrightarrow\quad
        R(\mathbb C)=R(\widetilde{\mathbb C}),
        \qquad \mathbb C,\widetilde{\mathbb C}\in U_{\rm el}.
\]
It remains to verify the analytic dependence of the localized elasticity D-N map.

\begin{lemma}[Analyticity of the localized elasticity D-N map]\label{lem:elastic-analytic}
The map
\[
        \mathbb C\mapsto\Lambda_{\mathbb C}^\Gamma
\]
is real analytic from $U_{\rm el}$ into $\cL(X,X^*)$.
\end{lemma}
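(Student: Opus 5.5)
The argument follows the proof of Lemma~\ref{lem:cond-analytic}, with the Dirichlet problem in place of the Neumann problem. Because the data are Dirichlet traces, the first step is to reduce to a problem with homogeneous boundary values. Fix a bounded right inverse of the trace, $E:X\to H^1(\Omega;\R^3)$. One can obtain it by restricting to $X$ a bounded right inverse $H^{1/2}(\partial\Omega;\R^3)\to H^1(\Omega;\R^3)$; this is legitimate because $X$ is a closed subspace of $H^{1/2}(\partial\Omega;\R^3)$. Set $V=H^1_0(\Omega;\R^3)$. For a tuple $\mathbb C\in U_{\rm el}$, define
\[
        a_{\mathbb C}(u,v)=\sum_{\alpha=1}^N\int_{D_\alpha}\mathbb C^\alpha\varepsilon(u):\varepsilon(v)\,dx,
        \qquad u,v\in H^1(\Omega;\R^3).
\]
This is linear in $\mathbb C$ and bounded on $H^1\times H^1$. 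From it define the two operators
\[
        L_{\mathbb C}:V\to V^*,\qquad \dual{L_{\mathbb C}u}{v}=a_{\mathbb C}(u,v),
\]
\[
        B_{\mathbb C}:H^1(\Omega;\R^3)\to V^*,\qquad \dual{B_{\mathbb C}w}{v}=a_{\mathbb C}(w,v).
\]
Both depend linearly, hence analytically, on $\mathbb C$, viewed as maps from the finite-dimensional parameter space into the relevant operator spaces.

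The second step is invertibility of $L_{\mathbb C}$. Fix $\mathbb C\in U_{\rm el}$ and choose $\lambda_{\mathbb C}>0$ so that $\mathbb C^\alpha E:E\ge\lambda_{\mathbb C}|E|^2$ for all $\alpha$ and all $E\in\Sym(3)$. Combining this with Korn's inequality on $H^1_0$ gives $a_{\mathbb C}(u,u)\ge c\lambda_{\mathbb C}\|u\|_V^2$. By Lax--Milgram, $L_{\mathbb C}$ is an isomorphism. Since inversion is analytic on isomorphisms, $\mathbb C\mapsto L_{\mathbb C}^{-1}\in\cL(V^*,V)$ is real analytic. The weak solution with trace $f$ is therefore
\[
        u_{\mathbb C}^f=Ef-L_{\mathbb C}^{-1}B_{\mathbb C}Ef.
\]
It follows that $\mathbb C\mapsto u_{\mathbb C}^{(\cdot)}\in\cL(X,H^1(\Omega;\R^3))$ is analytic: it is a sum and composition of analytic factors, and composition is a continuous bilinear operation.

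The third step is to express the data map. By the symmetric-gradient identity recorded earlier, the paper's full-gradient pairing equals $a_{\mathbb C}(u,v)$, where one may take $v=Eg$. Hence
\[
        \dual{\Lambda_{\mathbb C}^\Gamma f}{g}=a_{\mathbb C}(u_{\mathbb C}^f,Eg).
\]
Let $\mathcal A_{\mathbb C}:H^1\to (H^1)^*$ be the operator associated with $a_{\mathbb C}$. Then
\[
        \Lambda_{\mathbb C}^\Gamma=E^\sharp\,\mathcal A_{\mathbb C}\,\bigl(I-L_{\mathbb C}^{-1}B_{\mathbb C}\bigr)E,
\]
where $E^\sharp:(H^1)^*\to X^*$ is the adjoint of $E$. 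In this expression $\mathcal A_{\mathbb C}$ is linear in $\mathbb C$, the outer factors $E$ and $E^\sharp$ are fixed, and the middle factor is analytic. Products of analytic operator-valued maps are analytic, since multiplication is continuous bilinear. This gives analyticity into $\cL(X,X^*)$.

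The only point needing care is the middle step. Korn's inequality must give coercivity uniformly on a neighborhood of each $\mathbb C$, so that the tuples in question lie in the open set of isomorphisms. Openness of that set covers this automatically. The piecewise-constant coefficients cause no difficulty, because they enter only as bounded multipliers.
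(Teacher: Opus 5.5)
Your proposal is correct and follows essentially the same route as the paper: a fixed trace lift $E$, an $H^1_0$ correction obtained from the analytically varying inverse $L_{\mathbb C}^{-1}$ (coercive by strong convexity and Korn), and the D-N map written as a composition of fixed factors, factors linear in $\mathbb C$, and $L_{\mathbb C}^{-1}$. Your factorization $E^\sharp\mathcal A_{\mathbb C}(I-L_{\mathbb C}^{-1}B_{\mathbb C})E$ is a repackaging of the paper's $Q_{\mathbb C}-P_{\mathbb C}^{\sharp}L_{\mathbb C}^{-1}P_{\mathbb C}$ (with $P_{\mathbb C}=B_{\mathbb C}E$), and it has the minor advantage of not needing the symmetry of $a_{\mathbb C}$.
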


\begin{proof}
We fix one extension operator and solve only for the zero-boundary correction.

Let
\[
        V_0=H^1_0(\Omega;\R^3).
\]
Choose a bounded right inverse of the trace map
\[
        E:H^{1/2}(\partial\Omega;\R^3)\to H^1(\Omega;\R^3),
\]
and, for $f\in X$, apply it to the zero extension of $f$ from $\Gamma$ to $\partial\Omega$.  We still denote the resulting bounded map $X\to H^1(\Omega;\R^3)$ by $E$.

For the symmetric-gradient form
\[
        a_{\mathbb C}(u,v)=
        \int_\Omega \mathbb C(x)\varepsilon(u):\varepsilon(v)\,dx,
\]
define
\[
        L_{\mathbb C}:V_0\to V_0^*,
        \qquad
        \dual{L_{\mathbb C}w}{v}=a_{\mathbb C}(w,v).
\]
The full-gradient formulation used in the statement of the inverse problem agrees with this symmetric-gradient form by the index calculation in Subsection \ref{subsec:elasticity-data}.  The map $\mathbb C\mapsto L_{\mathbb C}$ is linear: its matrix coefficients are finite linear combinations of fixed operators of the form
\[
        (w,v)\mapsto
        \int_{D_\alpha}\varepsilon_{ij}(w)\varepsilon_{k\ell}(v)\,dx .
\]
Strong convexity and Korn's inequality give coercivity on $V_0$, hence $L_{\mathbb C}$ is a bounded isomorphism for every $\mathbb C\in U_{\rm el}$.  Therefore
\[
        \mathbb C\mapsto L_{\mathbb C}^{-1}
\]
is real analytic into $\cL(V_0^*,V_0)$.

Now define two auxiliary operators.  First,
\[
        P_{\mathbb C}:X\to V_0^*,
        \qquad
        \dual{P_{\mathbb C}f}{v}=a_{\mathbb C}(Ef,v),
        \quad v\in V_0 .
\]
Second,
\[
        Q_{\mathbb C}:X\to X^*,
        \qquad
        \dual{Q_{\mathbb C}f}{g}=a_{\mathbb C}(Ef,Eg).
\]
Both $P_{\mathbb C}$ and $Q_{\mathbb C}$ depend linearly, hence analytically, on $\mathbb C$.  If $f\in X$, the solution with boundary value $f$ has the form
\[
        u_{\mathbb C}^f=Ef+w_{\mathbb C}^f,
        \qquad w_{\mathbb C}^f\in V_0.
\]
The weak equation $a_{\mathbb C}(u_{\mathbb C}^f,v)=0$ for every $v\in V_0$ gives
\[
        L_{\mathbb C}w_{\mathbb C}^f=-P_{\mathbb C}f,
        \qquad
        w_{\mathbb C}^f=-L_{\mathbb C}^{-1}P_{\mathbb C}f.
\]
Thus the solution operator is analytic after the fixed lift $Ef$ is separated off.

It remains to write the boundary traction pairing in terms of these operators.  For $z\in V_0$, define
\[
        P_{\mathbb C}^{\sharp}z\in X^*,
        \qquad
        \dual{P_{\mathbb C}^{\sharp}z}{g}
        =\dual{P_{\mathbb C}g}{z}
        =a_{\mathbb C}(Eg,z).
\]
The map $\mathbb C\mapsto P_{\mathbb C}^{\sharp}$ is again linear.  Since $u_{\mathbb C}^f$ solves the homogeneous equation, the D-N pairing may be computed with the extension $Eg$ of the boundary test $g$:
\[
\begin{aligned}
        \dual{\Lambda_{\mathbb C}^\Gamma f}{g}
        &=a_{\mathbb C}(u_{\mathbb C}^f,Eg) \\
        &=a_{\mathbb C}(Ef,Eg)
          -a_{\mathbb C}(L_{\mathbb C}^{-1}P_{\mathbb C}f,Eg) \\
        &=\dual{Q_{\mathbb C}f}{g}
          -\dual{P_{\mathbb C}^{\sharp}L_{\mathbb C}^{-1}P_{\mathbb C}f}{g}.
\end{aligned}
\]
Here $L_{\mathbb C}^{-1}P_{\mathbb C}f\in V_0$, so $P_{\mathbb C}^{\sharp}$ applies to it.  The last equality uses the symmetry of $a_{\mathbb C}$ to rewrite
\[
        a_{\mathbb C}(L_{\mathbb C}^{-1}P_{\mathbb C}f,Eg)
        =a_{\mathbb C}(Eg,L_{\mathbb C}^{-1}P_{\mathbb C}f).
\]
Therefore, as an operator $X\to X^*$,
\[
        \Lambda_{\mathbb C}^\Gamma
        =Q_{\mathbb C}
        -P_{\mathbb C}^{\sharp}L_{\mathbb C}^{-1}P_{\mathbb C}.
\]
This formula expresses the D-N map using only linear functions of $\mathbb C$, the analytic inverse $L_{\mathbb C}^{-1}$, and bounded bilinear composition of operators.  Hence $\mathbb C\mapsto\Lambda_{\mathbb C}^\Gamma$ is real analytic in $\cL(X,X^*)$.

The formula does not depend on the auxiliary choice of $E$.  If the test extension $Eg$ is changed by an element of $V_0$, the extra term pairs to zero with $u_{\mathbb C}^f$ by the weak equation.  If the lift of $f$ is changed, the zero-boundary correction changes by the opposite zero-boundary function, so the resulting solution $u_{\mathbb C}^f$ is unchanged by uniqueness.
\end{proof}

\subsection{Stability corollaries}

\begin{corollary}[H\"older stability for reachable elasticity tensors]\label{cor:elastic-holder}
Assume the hypotheses of Theorem \ref{thm:CHN-chain}.  Fix any Euclidean norm on the finite-dimensional space of elasticity tensors satisfying the stated symmetries; changing this norm only changes the constants below.  For every $0<\lambda<\Lambda<\infty$ there exist constants $C>0$ and $\theta\in(0,1]$ such that
\begin{equation}\label{eq:elastic-holder}
        \max_{\alpha\in\cR}
        |\mathbb C^\alpha-
        \widetilde{\mathbb C}^{\alpha}|
        \le C
        \|\Lambda_{\mathbb C}^\Gamma-
        \Lambda_{\widetilde{\mathbb C}}^\Gamma\|_{\cL(X,X^*)}^{\theta}
\end{equation}
for all $\mathbb C,\widetilde{\mathbb C}\in K_{\lambda,\Lambda}^{\rm el}$.  If all cells are reachable, the maximum may be taken over all $\alpha=1,\ldots,N$.
\end{corollary}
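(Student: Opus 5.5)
The plan is to apply Theorem \ref{thm:abstract-holder} to $F(\mathbb C)=\Lambda_{\mathbb C}^\Gamma$ and $R(\mathbb C)=(\mathbb C^\alpha)_{\alpha\in\cR}$ on the compact set $K=K_{\lambda,\Lambda}^{\rm el}$, exactly parallel to the proof of Corollary \ref{cor:cond-holder}. The parameter space is the finite-dimensional vector space of tuples of tensors with the elasticity symmetries. After fixing a basis it is identified with some $\R^m$, and $U_{\rm el}$ is an open subset of it.

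The first step is to check the hypotheses one at a time.
\begin{enumerate}[label=(\roman*)]
\item Analyticity of $F$ into $\cL(X,X^*)$ is Lemma \ref{lem:elastic-analytic}. To put the data in the Hilbert-valued form required by Theorem \ref{thm:abstract-holder}, compose with the inverse Riesz isomorphism $X^*\to X$. This is a fixed bounded linear isomorphism, so it preserves analyticity. It also preserves the implication $F(\mathbb C)=F(\widetilde{\mathbb C})\Rightarrow R(\mathbb C)=R(\widetilde{\mathbb C})$, and it changes operator norms only up to fixed constants. Here $X$ is a closed subspace of the separable Hilbert space $H^{1/2}(\partial\Omega;\R^3)$, so it is itself a separable Hilbert space.
\item The map $R$ is a linear coordinate projection, hence $C^1$.
\item The exact recovery implication on all of $U_{\rm el}$ is the consequence of Theorem \ref{thm:CHN-chain} recorded in Subsection \ref{subsec:elastic-reduction}. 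Given any two tuples, one chooses a convexity constant that works for both, as was done after Theorem \ref{thm:ADHG-blackbox}.
\end{enumerate}

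The step that needs actual care is showing that $K_{\lambda,\Lambda}^{\rm el}\Subset U_{\rm el}$.
\begin{itemize}
\item \emph{Closed:} the set is defined by non-strict inequalities $\lambda|E|^2\le\mathbb C^\alpha E:E\le\Lambda|E|^2$, which are continuous in $\mathbb C$ for each fixed $E$.
\item \emph{Bounded:} the upper bound controls the quadratic form $E\mapsto\mathbb C^\alpha E:E$ on $\Sym(3)$. By the major symmetry, and since the minor symmetries let $\mathbb C^\alpha$ be viewed as a symmetric operator on $\Sym(3)$, this quadratic form determines $\mathbb C^\alpha$ through polarization. Hence $|\mathbb C^\alpha|$ is bounded in terms of $\Lambda$.
\item \emph{Inside $U_{\rm el}$:} every element satisfies strong convexity with constant $\lambda>0$, so the set is a compact subset of the open set $U_{\rm el}$.
\end{itemize}

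Theorem \ref{thm:abstract-holder} then gives $C>0$ and $\theta\in(0,1]$ with
\[
|R(\mathbb C)-R(\widetilde{\mathbb C})|\le C\|\Lambda_{\mathbb C}^\Gamma-\Lambda_{\widetilde{\mathbb C}}^\Gamma\|_{\cL(X,X^*)}^\theta
\]
on $K_{\lambda,\Lambda}^{\rm el}$. Here $|\cdot|$ is the Euclidean norm in the chosen coordinates. All norms on the finite-dimensional space of reachable tensor tuples are equivalent, so this norm is comparable to $\max_{\alpha\in\cR}|\mathbb C^\alpha-\widetilde{\mathbb C}^\alpha|$ in the fixed tensor norm, and \eqref{eq:elastic-holder} follows after enlarging $C$.

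If every cell is reachable, then $\cR=\{1,\ldots,N\}$ and $R$ is the identity, so the maximum runs over all cells. I expect no real obstacle: the only points beyond bookkeeping are the compactness verification above and the Riesz transfer, which is handled identically to the conductivity case.
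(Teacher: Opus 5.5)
Your proposal is correct and follows the paper's proof. You apply Theorem~\ref{thm:abstract-holder} to $F(\mathbb C)=\Lambda_{\mathbb C}^\Gamma$ and $R(\mathbb C)=(\mathbb C^\alpha)_{\alpha\in\cR}$ after composing with the inverse Riesz isomorphism $X^*\to X$, then pass to the maximum norm by equivalence of norms in finite dimensions; your explicit check that $K_{\lambda,\Lambda}^{\rm el}\Subset U_{\rm el}$ (closedness, boundedness via polarization, strong convexity) fills in a step the paper leaves implicit.
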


\begin{proof}
The preceding subsection gives an analytic map
$F(\mathbb C)=\Lambda_{\mathbb C}^\Gamma$ with values in $\cL(X,X^*)$ and the
exact implication
$F(\mathbb C)=F(\widetilde{\mathbb C})\Rightarrow
R(\mathbb C)=R(\widetilde{\mathbb C})$ for
$R(\mathbb C)=(\mathbb C^\alpha)_{\alpha\in\cR}$.  Composing with the inverse
Riesz isomorphism $X^*\to X$ puts the data in the Hilbert-valued form of Theorem
\ref{thm:abstract-holder}; this fixed isomorphism preserves exact recovery and
changes the operator norm only by a fixed equivalence.  Applying the theorem on
$K_{\lambda,\Lambda}^{\rm el}$ gives \eqref{eq:elastic-holder}, up to equivalence
of finite-dimensional norms.
\end{proof}

\begin{corollary}[Finite localized elasticity measurements]\label{cor:elastic-finite}
Under the same hypotheses, there exist finitely many localized boundary
displacements
\[
        f_1,\ldots,f_M,
        g_1,\ldots,g_M\in X
\]
The displacements and tests may be chosen from any prescribed countable dense
subset of $X$.  Moreover, there are constants $C>0$, $\theta\in(0,1]$ such that
\[
        \max_{\alpha\in\cR}
        |\mathbb C^\alpha-
        \widetilde{\mathbb C}^{\alpha}|
        \le C
        \left(\sum_{\ell=1}^M
        \left|
        \dual{(\Lambda_{\mathbb C}^\Gamma-
        \Lambda_{\widetilde{\mathbb C}}^\Gamma)f_\ell}{g_\ell}
        \right|^2\right)^{\theta/2}
\]
for all $\mathbb C,\widetilde{\mathbb C}\in K_{\lambda,\Lambda}^{\rm el}$.
\end{corollary}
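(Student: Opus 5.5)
The plan is to apply Theorem~\ref{thm:finite-measurements} to the analytic map $F(\mathbb C)=\Lambda_{\mathbb C}^\Gamma$ on $U_{\rm el}$, with recovered quantity $R(\mathbb C)=(\mathbb C^\alpha)_{\alpha\in\cR}$ and compact set $K_{\lambda,\Lambda}^{\rm el}$. The corollary is the elasticity counterpart of Corollary~\ref{cor:cond-finite}, so no new analytic input is needed. The argument reduces to checking that every hypothesis has already been verified, and that the abstract finite-measurement conclusion can be read back in the duality-pairing form stated above.

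\emph{Step 1: hypotheses.} The parameter set $U_{\rm el}$ is an open subset of a finite-dimensional vector space; we identify it with an open subset of $\R^m$ through a basis of the symmetric tensor tuples. Lemma~\ref{lem:elastic-analytic} shows that $F$ is real analytic into $\cL(X,X^*)$. The map $R$ is a linear projection, hence $C^1$. Theorem~\ref{thm:CHN-chain} gives the exact implication $F(\mathbb C)=F(\widetilde{\mathbb C})\Rightarrow R(\mathbb C)=R(\widetilde{\mathbb C})$ on all of $U_{\rm el}$. For any two tuples one simply chooses convexity bounds containing both, as was done in the conductivity case. The set $K_{\lambda,\Lambda}^{\rm el}$ is closed and bounded, and it lies in $U_{\rm el}$ because of the lower bound $\lambda>0$. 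It is therefore compact, which gives $K_{\lambda,\Lambda}^{\rm el}\Subset U_{\rm el}$. Finally, $X$ is a closed subspace of $H^{1/2}(\partial\Omega;\R^3)$, so it is a separable Hilbert space.

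\emph{Step 2: the $X^*$-valued data.} As in Corollary~\ref{cor:elastic-holder}, set $\widetilde F=\mathcal R_X^{-1}F$, which takes values in $\cL(X,X)$. This is analytic, since it is a fixed bounded linear map composed with $F$, and it has the same fibres as $F$. For a prescribed countable dense set $(x_i)\subset X$, use it both as the input family and as the test family. Then
\[
(\widetilde F(\mathbb C)x_i,x_j)_X=\dual{\Lambda_{\mathbb C}^\Gamma x_i}{x_j},
\]
so the scalar observations of Theorem~\ref{thm:finite-measurements} are exactly the pairings appearing in the statement. Theorem~\ref{thm:finite-measurements} then provides pairs $(f_\ell,g_\ell)=(x_{i_\ell},x_{j_\ell})$ and constants $C,\theta$. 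Taking the maximum over $\alpha\in\cR$ of the Euclidean norm of $\mathbb C^\alpha-\widetilde{\mathbb C}^\alpha$ instead of $|R(\mathbb C)-R(\widetilde{\mathbb C})|$ only changes $C$, by equivalence of norms in finite dimensions.

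The only point requiring care is Step 2: checking that the Riesz identification turns the inner-product matrix elements of the abstract theorem into the duality pairings, and that the chosen countable set may serve as both the input and the test family. This is the remark made just before Theorem~\ref{thm:finite-measurements}, so I do not expect a genuine obstacle.
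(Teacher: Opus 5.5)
Your proposal is correct and takes the same route as the paper. The paper's proof is the one-line application of Theorem~\ref{thm:finite-measurements} to $\mathbb C\mapsto\Lambda_{\mathbb C}^\Gamma$ with $R(\mathbb C)=(\mathbb C^\alpha)_{\alpha\in\cR}$. Your Steps 1 and 2 just make explicit what it leaves implicit: the compactness of $K_{\lambda,\Lambda}^{\rm el}$, the Riesz identification that turns the matrix elements into the pairings $\dual{\Lambda_{\mathbb C}^\Gamma x_i}{x_j}$, the use of one dense set for both inputs and tests, and the equivalence of finite-dimensional norms.
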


\begin{proof}
Apply Theorem \ref{thm:finite-measurements} to the analytic map $\mathbb C\mapsto\Lambda_{\mathbb C}^\Gamma$ and the recovered quantity $R(\mathbb C)=(\mathbb C^\alpha)_{\alpha\in\cR}$.
\end{proof}

The examples in Sections~\ref{sec:conductivity} and \ref{sec:elasticity} show how
known exact recovery theorems can be combined with the abstract analytic
argument.  In both cases the inverse-problem input is contained in the cited
uniqueness theorem, while the stability estimate follows from real analyticity
of the corresponding boundary map.

\subsection*{Acknowledgements}
The author was supported by NSTC grant 113-2115-M-A49-018-MY3.

\bibliographystyle{plain}
\bibliography{refs}

@article{AlbertiArroyoSantacesaria2023,
  author  = {Alberti, Giovanni S. and Arroyo, {\'A}ngel and Santacesaria, Matteo},
  title   = {Inverse problems on low-dimensional manifolds},
  journal = {Nonlinearity},
  volume  = {36},
  number  = {1},
  pages   = {734--808},
  year    = {2023},
  note    = {\doi{10.1088/1361-6544/aca73d}}
}

@article{AlbertiSantacesaria2019,
  author  = {Alberti, Giovanni S. and Santacesaria, Matteo},
  title   = {Calderon's inverse problem with a finite number of measurements},
  journal = {Forum of Mathematics, Sigma},
  volume  = {7},
  pages   = {e35},
  year    = {2019},
  note    = {\doi{10.1017/fms.2019.31}}
}

@article{AlbertiSantacesaria2022,
  author  = {Alberti, Giovanni S. and Santacesaria, Matteo},
  title   = {Infinite-dimensional inverse problems with finite measurements},
  journal = {Archive for Rational Mechanics and Analysis},
  volume  = {243},
  number  = {1},
  pages   = {1--31},
  year    = {2022},
  note    = {\doi{10.1007/s00205-021-01718-4}}
}

@article{Alessandrini1988,
  author  = {Alessandrini, Giovanni},
  title   = {Stable determination of conductivity by boundary measurements},
  journal = {Applicable Analysis},
  volume  = {27},
  number  = {1--3},
  pages   = {153--172},
  year    = {1988},
  note    = {\doi{10.1080/00036818808839730}}
}

@article{AlessandriniDeHoopGaburro2017,
  author  = {Alessandrini, Giovanni and {de Hoop}, Maarten V. and Gaburro, Romina},
  title   = {Uniqueness for the electrostatic inverse boundary value problem with piecewise constant anisotropic conductivities},
  journal = {Inverse Problems},
  volume  = {33},
  number  = {12},
  pages   = {125013},
  year    = {2017},
  note    = {\doi{10.1088/1361-6420/aa982d}}
}

@article{AlessandriniDeHoopGaburroSincich2017,
  author  = {Alessandrini, Giovanni and {de Hoop}, Maarten V. and Gaburro, Romina and Sincich, Eva},
  title   = {Lipschitz stability for the electrostatic inverse boundary value problem with piecewise linear conductivities},
  journal = {Journal de Math{\'e}matiques Pures et Appliqu{\'e}es},
  volume  = {107},
  number  = {5},
  pages   = {638--664},
  year    = {2017},
  note    = {\doi{10.1016/j.matpur.2016.10.001}}
}

@article{AlessandriniGaburroSincich2024,
  author  = {Alessandrini, Giovanni and Gaburro, Romina and Sincich, Eva},
  title   = {Determining an anisotropic conductivity by boundary measurements: Stability at the boundary},
  journal = {Journal of Differential Equations},
  volume  = {382},
  pages   = {115--140},
  year    = {2024},
  note    = {\doi{10.1016/j.jde.2023.11.001}}
}

@article{AlessandriniVessella2005,
  author  = {Alessandrini, Giovanni and Vessella, Sergio},
  title   = {Lipschitz stability for the inverse conductivity problem},
  journal = {Advances in Applied Mathematics},
  volume  = {35},
  number  = {2},
  pages   = {207--241},
  year    = {2005},
  note    = {\doi{10.1016/j.aam.2004.12.002}}
}

@article{BerettaDeHoopFranciniVessellaZhai2017,
  author  = {Beretta, Elena and {de Hoop}, Maarten V. and Francini, Elisa and Vessella, Sergio and Zhai, Jian},
  title   = {Uniqueness and {Lipschitz} stability of an inverse boundary value problem for time-harmonic elastic waves},
  journal = {Inverse Problems},
  volume  = {33},
  number  = {3},
  pages   = {035013},
  year    = {2017},
  note    = {\doi{10.1088/1361-6420/aa5bef}}
}

@article{BerettaDeHoopQiu2013,
  author  = {Beretta, Elena and {de Hoop}, Maarten V. and Qiu, Lingyun},
  title   = {Lipschitz stability of an inverse boundary value problem for a {Schr{\"o}dinger}-type equation},
  journal = {SIAM Journal on Mathematical Analysis},
  volume  = {45},
  number  = {2},
  pages   = {679--699},
  year    = {2013},
  note    = {\doi{10.1137/120869201}}
}

@article{BerettaFrancini2011,
  author  = {Beretta, Elena and Francini, Elisa},
  title   = {Lipschitz stability for the electrical impedance tomography problem: the complex case},
  journal = {Communications in Partial Differential Equations},
  volume  = {36},
  number  = {10},
  pages   = {1723--1749},
  year    = {2011},
  note    = {\doi{10.1080/03605302.2011.552930}}
}

@article{BerettaFranciniMorassiRossetVessella2014,
  author  = {Beretta, Elena and Francini, Elisa and Morassi, Antonino and Rosset, Edi and Vessella, Sergio},
  title   = {Lipschitz continuous dependence of piecewise constant {Lam{\'e}} coefficients from boundary data: the case of non-flat interfaces},
  journal = {Inverse Problems},
  volume  = {30},
  number  = {12},
  pages   = {125005},
  year    = {2014},
  note    = {\doi{10.1088/0266-5611/30/12/125005}}
}

@book{BochnakCosteRoy1998,
  author    = {Bochnak, Jacek and Coste, Michel and Roy, Marie-Fran{\c c}oise},
  title     = {Real Algebraic Geometry},
  series    = {Ergebnisse der Mathematik und ihrer Grenzgebiete},
  volume    = {36},
  publisher = {Springer},
  address   = {Berlin},
  year      = {1998}
}

@article{Bourgeois2013,
  author  = {Bourgeois, Laurent},
  title   = {A remark on {Lipschitz} stability for inverse problems},
  journal = {Comptes Rendus Math{\'e}matique},
  volume  = {351},
  number  = {5--6},
  pages   = {187--190},
  year    = {2013},
  note    = {\doi{10.1016/j.crma.2013.04.004}}
}

@incollection{Calderon1980,
  author    = {Calder{\'o}n, Alberto P.},
  title     = {On an inverse boundary value problem},
  booktitle = {Seminar on Numerical Analysis and its Applications to Continuum Physics},
  pages     = {65--73},
  publisher = {Sociedade Brasileira de Matem{\'a}tica},
  address   = {Rio de Janeiro},
  year      = {1980}
}

@article{CarsteaHondaNakamura2018,
  author  = {C{\^a}rstea, C{\u a}t{\u a}lin I. and Honda, Naofumi and Nakamura, Gen},
  title   = {Uniqueness in the inverse boundary value problem for piecewise homogeneous anisotropic elasticity},
  journal = {SIAM Journal on Mathematical Analysis},
  volume  = {50},
  number  = {3},
  pages   = {3291--3302},
  year    = {2018},
  note    = {\doi{10.1137/17M1125662}}
}

@article{Cartan1950,
  author  = {Cartan, Henri},
  title   = {Id{\'e}aux et modules de fonctions analytiques de variables complexes},
  journal = {Bulletin de la Soci{\'e}t{\'e} Math{\'e}matique de France},
  volume  = {78},
  pages   = {29--64},
  year    = {1950},
  note    = {\doi{10.24033/bsmf.1409}}
}

@book{Ciarlet1988,
  author    = {Ciarlet, Philippe G.},
  title     = {Mathematical Elasticity. Vol. I: Three-Dimensional Elasticity},
  publisher = {North-Holland},
  address   = {Amsterdam},
  year      = {1988}
}

@article{Feehan2020,
  author  = {Feehan, Paul M. N.},
  title   = {Resolution of singularities and geometric proofs of the {\L}ojasiewicz inequalities},
  journal = {Geometry \& Topology},
  volume  = {23},
  number  = {7},
  pages   = {3273--3313},
  year    = {2019},
  note    = {\doi{10.2140/gt.2019.23.3273}}
}

@article{GaburroSincich2015,
  author  = {Gaburro, Romina and Sincich, Eva},
  title   = {Lipschitz stability for the inverse conductivity problem for a conformal class of anisotropic conductivities},
  journal = {Inverse Problems},
  volume  = {31},
  number  = {1},
  pages   = {015008},
  year    = {2015},
  note    = {\doi{10.1088/0266-5611/31/1/015008}}
}

@article{KohnVogelius1984,
  author  = {Kohn, Robert V. and Vogelius, Michael},
  title   = {Determining conductivity by boundary measurements},
  journal = {Communications on Pure and Applied Mathematics},
  volume  = {37},
  number  = {3},
  pages   = {289--298},
  year    = {1984},
  note    = {\doi{10.1002/cpa.3160370302}}
}

@article{KohnVogelius1985,
  author  = {Kohn, Robert V. and Vogelius, Michael},
  title   = {Determining conductivity by boundary measurements. {II}. Interior results},
  journal = {Communications on Pure and Applied Mathematics},
  volume  = {38},
  number  = {5},
  pages   = {643--667},
  year    = {1985},
  note    = {\doi{10.1002/cpa.3160380513}}
}

@article{Mandache2001,
  author  = {Mandache, Niculae},
  title   = {Exponential instability in an inverse problem for the {Schr{\"o}dinger} equation},
  journal = {Inverse Problems},
  volume  = {17},
  number  = {5},
  pages   = {1435--1444},
  year    = {2001},
  note    = {\doi{10.1088/0266-5611/17/5/313}}
}

@article{Nachman1996,
  author  = {Nachman, Adrian I.},
  title   = {Global uniqueness for a two-dimensional inverse boundary value problem},
  journal = {Annals of Mathematics},
  volume  = {143},
  number  = {1},
  pages   = {71--96},
  year    = {1996},
  note    = {\doi{10.2307/2118653}}
}

@article{RulandSincich2019,
  author  = {R{\"u}land, Angkana and Sincich, Eva},
  title   = {Lipschitz stability for the finite dimensional fractional {Calder{\'o}n} problem with finite {Cauchy} data},
  journal = {Inverse Problems and Imaging},
  volume  = {13},
  number  = {5},
  pages   = {1023--1044},
  year    = {2019},
  note    = {\doi{10.3934/ipi.2019046}}
}

@article{SylvesterUhlmann1987,
  author  = {Sylvester, John and Uhlmann, Gunther},
  title   = {A global uniqueness theorem for an inverse boundary value problem},
  journal = {Annals of Mathematics},
  volume  = {125},
  number  = {1},
  pages   = {153--169},
  year    = {1987},
  note    = {\doi{10.2307/1971291}}
}

@article{AspriBerettaFranciniVessella2022,
  author  = {Aspri, Andrea and Beretta, Elena and Francini, Elisa and Vessella, Sergio},
  title   = {Lipschitz Stable Determination of Polyhedral Conductivity Inclusions from Local Boundary Measurements},
  journal = {SIAM Journal on Mathematical Analysis},
  volume  = {54},
  number  = {5},
  pages   = {5182--5222},
  year    = {2022},
  note    = {\doi{10.1137/22M1480550}}
}

@article{BerettaFranciniVessella2014,
  author  = {Beretta, Elena and Francini, Elisa and Vessella, Sergio},
  title   = {Uniqueness and {Lipschitz} stability for the identification of {Lam{\'e}} parameters from boundary measurements},
  journal = {Inverse Problems and Imaging},
  volume  = {8},
  number  = {3},
  pages   = {611--644},
  year    = {2014},
  note    = {\doi{10.3934/ipi.2014.8.611}}
}

@article{Harrach2019,
  author  = {Harrach, Bastian},
  title   = {Uniqueness and {Lipschitz} stability in electrical impedance tomography with finitely many electrodes},
  journal = {Inverse Problems},
  volume  = {35},
  number  = {2},
  pages   = {024005},
  year    = {2019},
  note    = {\doi{10.1088/1361-6420/aaf6fc}}
}

@article{RulandSincich2022,
  author  = {R{\"u}land, Angkana and Sincich, Eva},
  title   = {On {Runge} approximation and {Lipschitz} stability for a finite-dimensional {Schr{\"o}dinger} inverse problem},
  journal = {Applicable Analysis},
  volume  = {101},
  number  = {10},
  pages   = {3655--3666},
  year    = {2022},
  note    = {\doi{10.1080/00036811.2020.1738403}}
}

\end{document}